\newcommand{\globalcolor}[1]{%
	\color{#1}\global\let\default@color\current@color
}
\definecolor{blush}{rgb}{0.87, 0.36, 0.51}
\definecolor{brightcerulean}{rgb}{0.11, 0.67, 0.84}
\definecolor{greenryb}{rgb}{0.4, 0.69, 0.2}
\newif\ifdark
\definecolor{darkred}{rgb}{0.9,0.2,0.2}
\definecolor{darkblue}{rgb}{0.7,0.3,1}
\definecolor{darkgreen}{rgb}{0.1,0.9,0.1}
\definecolor{franck}{rgb}{0,0.8,1}
\definecolor{pagebackground}{rgb}{.15,.21,.18}
\definecolor{pageforeground}{rgb}{.84,.84,.85}
\definecolor{symbols}{rgb}{0,0.7,1}
\colorlet{connection}{red!80!black}
\colorlet{boxcolor}{blue!50}
\definecolor{darkred}{rgb}{0.7,0.1,0.1}
\definecolor{darkblue}{rgb}{0.4,0.1,0.8}
\definecolor{darkgreen}{rgb}{0.1,0.7,0.1}
\definecolor{franck}{rgb}{0,0,1}
\definecolor{pagebackground}{rgb}{1,1,1}
\definecolor{pageforeground}{rgb}{0,0,0}
\colorlet{symbols}{blue!90!black}
\colorlet{connection}{red!30!black}
\colorlet{boxcolor}{blue!50!black}
\def\slash{\leavevmode\unskip\kern0.18em/\penalty\exhyphenpenalty\kern0.18em}
\def\dash{\leavevmode\unskip\kern0.18em--\penalty\exhyphenpenalty\kern0.18em}
\DeclareMathAlphabet{\mathbbm}{U}{bbm}{m}{n}
\DeclareFontFamily{U}{BOONDOX-calo}{\skewchar\font=45 }
\DeclareFontShape{U}{BOONDOX-calo}{m}{n}{
	<-> s*[1.05] BOONDOX-r-calo}{}
\DeclareFontShape{U}{BOONDOX-calo}{b}{n}{
	<-> s*[1.05] BOONDOX-b-calo}{}
\DeclareMathAlphabet{\mcb}{U}{BOONDOX-calo}{m}{n}
\SetMathAlphabet{\mcb}{bold}{U}{BOONDOX-calo}{b}{n}
\setlist{noitemsep,topsep=4pt,leftmargin=1.5em}
\DeclareMathAlphabet{\mathbbm}{U}{bbm}{m}{n}
\DeclareMathAlphabet{\mcb}{U}{BOONDOX-calo}{m}{n}
\SetMathAlphabet{\mcb}{bold}{U}{BOONDOX-calo}{b}{n}
\DeclareFontFamily{U}{mathx}{\hyphenchar\font45}
\DeclareFontShape{U}{mathx}{m}{n}{
	<5> <6> <7> <8> <9> <10>
	<10.95> <12> <14.4> <17.28> <20.74> <24.88>
	mathx10
}{}
\DeclareSymbolFont{mathx}{U}{mathx}{m}{n}
\DeclareMathSymbol{\bigtimes}{1}{mathx}{"91}
\providecommand{\figures}{false}
{ \ifthenelse{\equal{\figures}{false}} {#1}{\[ {\rm Figure \ missing !} \]} }{}
\def\msH{\mathscr{H}}
\def\CT{\mathcal{T}}
\tikzstyle{tinydots}=[dash pattern=on \pgflinewidth off \pgflinewidth]
\tikzstyle{superdense}=[dash pattern=on 4pt off 1pt]
\newcommand{\mcO}{\mathcal{O}}
\newcommand{\mcR}{\mathcal{R}}
\newcommand{\mcC}{\mathcal{C}}
\newcommand{\mcF}{\mathcal{F}}
\newcommand{\mcG}{\mathcal{G}}
\newcommand{\beq}{\begin{equation}}
	\newcommand{\eeq}{\end{equation}}
\newcommand{\mbbZ}{\mathbb{Z}}
\newcommand{\mbbR}{\mathbb{R}}
\def\${|\!|\!|}
\def\grad{\nabla}
\def\lap{\grad}
\def\pa{\partial}
\def\rbr#1{{\left(#1\right)}}
\def\set#1{{\{#1\}}}
\def\abs#1{{\left|#1\right|}}
\def\norm#1{{\left|\left|#1\right|\right|}}
\newenvironment{DIFnomarkup}{}{} % see man latexdiff
\newtheorem{assumption}{Assumption}
\newcommand{\rrightarrow}{{\to\hskip -4.9mm\raise 1pt\hbox{$\to$}}}
\newfont{\indic}{bbmss12}
\def\bk{\boldsymbol{k}}
\def\Nabla_#1{\nabla_{\!#1}}
\definecolor{oxfordblue}{rgb}{0.0, 0.13, 0.28}
\tikzstyle{leaf}=[circle, draw=black, fill=gray!20, inner sep = .1pt, font=\tiny, minimum size=3mm]
\tikzstyle{inner}=[circle, minimum size=1.5mm, fill=oxfordblue, inner sep =0]
\tikzstyle{F} = [thick, color=maroon, dotted,dash pattern=on 1pt off 1pt]
\tikzstyle{H0} = [color=cyan, dotted]
\tikzstyle{Poi} = [color=cyan]
\tikzstyle{H} = [thick, color=maroon]
\tikzstyle{Phi} = [thick, color=maroon, snake=snake,
\tikzstyle{Phi-conj} = [thick, color=maroon, snake=snake,
\tikzstyle{stoch-int} = [color=cyan, snake=zigzag,
\tikzstyle{stoch-conj-int} = [color=cyan, snake=triangles,
\definecolor{modebeige}{rgb}{0.59, 0.44, 0.09}
\definecolor{maroon}{rgb}{0.5, 0.0, 0.0}
\definecolor{mayablue}{rgb}{0.45, 0.76, 0.98}
\let\d\partial
\def\eqref#1{(\ref{#1})}
\newcommand*{\bigcdot}{}% Check if undefined
\DeclareRobustCommand*{\bigcdot}{%
	\mathbin{\mathpalette\bigcdot@{}}%
}
\newcommand*{\bigcdot@scalefactor}{.5}
\newcommand*{\bigcdot@widthfactor}{1.15}
\newcommand*{\bigcdot@}[2]{%
	% #1: math style
	% #2: unused
	\sbox0{$#1\vcenter{}$}% math axis
	\sbox2{$#1\cdot\m@th$}%
	\hbox to \bigcdot@widthfactor\wd2{%
		\hfil
		\raise\ht0\hbox{%
			\scalebox{\bigcdot@scalefactor}{%
				\lower\ht0\hbox{$#1\bullet\m@th$}%
			}%
		}%
		\hfil
	}%
}
\def\two{{\<generic>\kern0.05em\<genericb>}}
\def\twoI{{\<Ito>\kern0.05em\<Itob>}}
\def\mail#1{\burlalt{#1}{mailto:#1}}
\declaretheorem[style=definition]{example}
\begin{document}
	
	\title{Birkhoff normal form via decorated trees}
	
	\author{Jacob Armstrong-Goodall$^1$, Yvain Bruned$^2$}
	\institute {Maxwell Institute for Mathematical Sciences, University of Edinburgh \and  Université de Lorraine, CNRS, IECL, F-54000 Nancy, France
		\\
		Email:\ \begin{minipage}[t]{\linewidth}
			\mail{j.a.armstrong-goodall@sms.ed.ac.uk} \\ \mail{yvain.bruned@univ-lorraine.fr}.
	\end{minipage}}
	
	\maketitle

	\begin{abstract}
		We derive an explicit tree based ansatz for the Birkhoff normal form up to any order in the context of Hamiltonian PDEs. To do so we make use of a tree based representation of iterated Poisson brackets to encode the nested Taylor expansions along flows of a sequence of symplectic transformations. As an example we consider the cubic Schrödinger equation.
	\end{abstract}

	\setcounter{tocdepth}{1}
	\tableofcontents

\section{Introduction}
The Birkhoff normal form is a classical technique in the study of Hamiltonian systems. Its purpose is to reduce the Hamiltonian to a simpler form via a series of symplectic transformations, leaving only resonant terms and a non-resonant remainder. Perhaps the most prevalent application of the Birkhoff normal form is in the development of KAM theory, which seeks to establish the long-term existence of quasi-periodic solutions under small perturbations in the energy. This problem was posed by Kolmogorov in 1954 and proved independently by Moser and Arnol'd in \cite{Moser62} and \cite{Arnol63}, respectively. The extension to infinite dimensions took place throughout the '80s and '90s in various works by Kuksin, Pöschel, Craig, Wayne, Bourgain, and many others \cite{Bou98,CraWa93,Kuksi87,KP96,Wayne90}. A generalization of the Birkhoff normal form theorem to Hamiltonian PDEs was performed by Bambusi in \cite{Bam03}, where it is proved that there exists a canonical transformation which reduces the Hamiltonian to its normal form up to order (N). This was followed up in work with Grébert \cite{BG06}, which demonstrates the applicability of this technique to a wide range of PDEs with a tame modulus condition. In \cite{BerFG20}, the stability result of \cite{BG06} was extended to a general class of fully resonant Schrödinger equations by introducing the rational normal form reduction. For a survey on the topic of normal forms in the context of PDE, see \cite{Grebe06}. Since then, applications in the study of PDEs include demonstrations of global well-posedness \cite{BeFaG20,Bou04,CoSYX25}, bounds on the growth of higher Sobolev norms \cite{CKO12}, long-term behavior of small solutions \cite{BG22}, and numerical analysis \cite{BeBCE23}. Another valuable contribution is found in \cite{Pere03}, where the convergence of the normal form itself is studied and shown to be either convergent or generically divergent. This paper also provides a good overview of the history of the Birkhoff normal form in perturbative Hamiltonian mechanics. In the related paper \cite{Kriko22}, Krikorian proves that the Birkhoff normal form is generally divergent, determining which of the two alternatives of Pérez-Marco's theorem in \cite{Pere03} is true.

Another well-known technique in the study of Hamiltonian systems is the Poincaré-Dulac normal form. This technique is more flexible than the Birkhoff variety studied here because it can be applied to Hamiltonian systems where the energy is not conserved. This flexibility comes at the expense of tracking information about the geometry of the space, meaning Poincaré-Dulac offers only a local picture of the dynamics. The Poincaré-Dulac approach is widely used to establish local well-posedness, for example in \cite{GKO13}, and to study the quasi-invariance of Gaussian measures via the modified energy in \cite{OST18,ST23}. The former work motivated the decorated tree approach by one of the authors in \cite{B24}, where the process of eliminating non-resonant interactions is connected to the formalism first introduced in the context of numerical schemes \cite{BS}, providing an algebraic perspective. Iterated Lie brackets appear in the Magnus expansion \cite{Magnu54}, for which a practical recursive algorithm similar in spirit to the present work can be found in \cite{Iserl11,IseNo99}. Algebraic aspects of the tree formulations for Magnus expansions and Lie integrators can be found in \cite{CuEbO20} and \cite{MunWr08}, respectively. The paper \cite{LauCo2025} uses a similar tree-based methodology to \cite{B24}, along with the Butcher-Connes-Kreimer Hopf algebra to study the group of Poisson diffeomorphisms, which leads to a new class of Poisson integrators. The tree-based approaches in \cite{B24,GKO13} were a catalyst for the approach we take here.

In the present work, we introduce an explicit formula for the Birkhoff normal form using decorated trees. The formulation of these trees is distinct from those in \cite{B24,BS} and is uniquely suited to describe Taylor expansions along Hamiltonian flows. The approach involves defining the symplectic transformations recursively and encoding the structure of the Poisson brackets into trees, resulting in a relatively intuitive formulation of the Birkhoff normal form to any order. One of the benefits of this approach is its generality; although the Schrödinger equation is used as an example, the rules for generating the correct trees are independent of the equation. This work may also pave the way for a deeper understanding of the combinatorial structure of resonances in Hamiltonian PDEs. It is somewhat surprising that no explicit form for such a widely used technique has appeared prior, but to our knowledge, this is the first presentation of such in the literature.

Let us briefly introduce this formalism.
One starts with an Hamiltonian of the form:
\begin{equation*}
	H = H_0 + H_1
	\end{equation*}
	where in the case of NLS, $H_0$ is a monomial of degree $2$ and $H_1$ is a monomial of degree $4$. The first step is to decompose $H_1$ into
	\begin{equation*}
		H_1 = H_1^{\text{\tiny{res}}} + H_1^{\text{\tiny{non-res}}}
		\end{equation*}
where  $H_1^{\text{\tiny{res}}}$ is the resonant part and  $H_1^{\text{\tiny{non-res}}}$ is the non-resonant part. One gets
\begin{equation*}
	H = H_0 + H_1^{\text{\tiny{res}}} + H_1^{\text{\tiny{non-res}}}.
	\end{equation*}
	Then, one wants to compose $H$ with a symplectic transform $F$ for eliminating $H_1^{\text{\tiny{non-res}}}$ and get higher resonant terms. It means that
	\begin{equation*}
		H \circ F = H+\sum_{n=1}^N\frac{\set{H,F}^{n}}{n!} + R_{N+1}
	\end{equation*}
	with some remainder $R_{N+1}$ and we ask $F$ to satisfy
	\begin{equation*}
		\set{H_0,F} =  - H_1^{\text{\tiny{non-res}}}.
		\end{equation*}
		This allows us to make disappear the term $H_1^{\text{\tiny{non-res}}}$. This procedure can be iterated with successive $F_i$ and it produces  the Birkhoff normal form. The main result of this paper, Theorem \ref{main_theorem}, provides an explicit formula of this decomposition with decorated trees:
		 One has for $m, \ell \in \mathbb{N}^{*}$ with $ m < \ell$.
			\begin{equation*}
				\begin{aligned}
					\msH_m^{\ell}=(H\circ F_1 \circ \cdots \circ F_m)^{\ell} & =  H_0 + \sum_{T \in \, \mathcal{T}^{\! < m + 2}_r}  \frac{(\Pi T)}{S(T)} +  \sum_{T \in \, \mathcal{T}^{\! m + 2}_{\circ}}  \frac{(\Pi T)}{S(T)} \\ & + \sum_{T \in \, \mathcal{T}_{\circ}^{m + 2,\ell}}   \frac{(\Pi T)}{S(T)}
				\end{aligned}
			\end{equation*}
			Moreover, one has
			\begin{equation*}
				F_i =  \sum_{T \in \, \mathcal{T}^{\! i+1}_{n}} \frac{(\Pi T)}{S(T)}, \quad \set{H_0, F_i} =  - \sum_{T \in \, \mathcal{T}^{\! i+1}_{\circ}} \frac{(\Pi T)^{\text{\tiny{non-res}}}}{S(T)},
			\end{equation*}
		where
		\begin{itemize}
			\item $ \msH^\ell_m $ is the $ m$-th order normal form truncated at order $ 2\ell $.
			\item  $\mathcal{T}^{\! m }_{\circ}$ (resp. $\mathcal{T}^{\! m,\ell }_{\circ}$) are decorated trees encoding iterated brackets where the outer bracket is $\set{\cdot, \cdot}$ and with the size $p$ of the monomials to be  equal to $ 2m $ (resp. $ 2m < p \leq 2 \ell $).
			\item $\mathcal{T}^{\! < m}_{r} $ encodes monomials of size strictly smaller than $2 m$ with outer bracket $\set{\cdot, \cdot}^{\text{\tiny{res}}}$. This means that one keeps only the resonant part of the iterated bracket.
			\item $\mathcal{T}^{\! m}_{n}$
			encodes monomials of size  $2 m$ with outer bracket $\set{\cdot, \cdot}^{\text{\tiny{non-res}}}_{\Phi}$. This means that one keeps only the non-resonant part of the iterated bracket divided by a well-chosen phase.
			\item $ F_i $ is an order $ 2i+2 $ symplectic Hamiltonians, \item The operator $ \Pi $ maps the tree back to a term consisting of iterated Poisson brackets while $ S(T) $ corresponds to the coefficient of the Taylor expansion.
			\end{itemize}
			 This theorem presents an explicit expression for the normal form up to any order. It is proved via induction. Below, we provide some examples of decorated trees:
			 \begin{equation*}
			 	\begin{aligned}
			 	 ( \Pi \, \begin{tikzpicture}[node distance=14pt, baseline=5]
			 		\node[leaf] (leaf) at (0,0.3) {$r$};
			 	\end{tikzpicture} ) &= H_1^{\text{\tiny{res}}} , \quad  (\Pi \, \begin{tikzpicture}[node distance = 14pt, baseline=10]
			 		% Inner Nodes
			 		\node[leaf] (inner) at (0,0.3) {$\circ $};
			 		% Leaf Nodes
			 		\node[leaf] (leaf1) [above left of = inner] {$ \circ $};
			 		\node[leaf] (leaf2) [above right of = inner] {$ n $};
			 		% Edges
			 		\draw[H] (inner.north west) -- (leaf1.south east);
			 		\draw[H] (inner.north east) -- (leaf2.south west);
			 	\end{tikzpicture} ) =  	\set{H_1, H_{1,\Phi}^{\text{\tiny{non-res}}}}, \\ (\Pi \, \begin{tikzpicture}[node distance = 14pt, baseline=10]
			 		% Inner Nodes
			 		\node[leaf] (inner) at (0,0.3) {$\circ $};
			 		% Leaf Nodes
			 		\node[leaf] (leaf1) [above left of = inner] {$ \circ $};
			 		\node[leaf] (leaf3) [above left of = leaf1] {$ k $};
			 		\node[leaf] (leaf4) [above right of = leaf1] {$ n $};
			 		\node[leaf] (leaf2) [above right of = inner] {$ n $};
			 		% Edges
			 		\draw[H] (inner.north west) -- (leaf1.south east);
			 		\draw[H] (inner.north east) -- (leaf2.south west);
			 		\draw[H] (leaf1.north east) -- (leaf4.south west);
			 		\draw[H] (leaf1.north west) -- (leaf3.south east);
			 	\end{tikzpicture}) & = \set{\set{H_0,H_{1,\Phi}^{\text{\tiny{non-res}}}}, H_{1,\Phi}^{\text{\tiny{non-res}}}}.
			 	\end{aligned}
			 \end{equation*}
			 The decorated trees are planar binary trees with node decorations $k, \circ, n, r$ that encode the various monomials and resonant/non-resonant parts.
	
In Section \ref{sec::Hamiltonian-Form}, we briefly introduce Hamiltonian systems, the symplectic form and the Poisson bracket in canonical coordinates. We demonstrate how this leads to a concept of Taylor expansion of a function along the flow of the Hamiltonian, which is an essential tool in constructing the normal form. Next, we detail the Hamiltonian of the cubic Schr\"odinger equation, explaining its structure in Fourier space, including the resonance conditions and phase functions. In Section \ref{sec::Birkhoff-Normal-Form}, we describe the algorithm for normal form reduction. This iterative procedure involves splitting each term in the Hamiltonian into resonant and non-resonant components before introducing a symplectic transform to eliminate the lowest remaining non-resonant terms, producing a new Hamiltonian. Each iteration introduces an infinite number of higher-order terms to be addressed in later iterations. The goal of this section is to elucidate the structure of these terms using the language of Poisson brackets and introduce definitions relevant to the tree based proofs in \ref{sec::tree-formulation} such as \ref{def::truncation-term}. We conclude Section \ref{sec::Birkhoff-Normal-Form} by introducing the recursive formulation of the symplectic transformations used in the normal form reduction. This formulation allows us to encode the entire normal form as trees generated according to certain rules. The construction and explanation of the decorated tree formulation is the focus of Section \ref{sec::tree-formulation}. We begin by introducing the notation and carefully describing the rules governing the structure of trees that may occur in the normal form. This is performed via Assumption \ref{assumption_1}. Proposition \ref{prop_iterated_bracket} establishes the mapping between the trees and the Taylor coefficients occurring in the iterated Poisson brackets, which is non-trivial because higher-order terms may contain lower-order Taylor expansions along multiple Hamiltonian flows. In Definition \ref{spaces_trees}, we define the spaces where all possible subtrees must reside according to the previously described rules. We finish by stating the main result of this paper, Theorem \ref{main_theorem} and we conduct an inductive proof.

\subsection*{Acknowledgements}

{\small
	Y. B. gratefully acknowledges funding
	support from the European Research Council (ERC) through the ERC Starting Grant Low
	Regularity Dynamics via Decorated Trees (LoRDeT), grant agreement No. 101075208.
	Views and opinions expressed are however those of the author(s) only and do not necessarily
	reflect those of the European Union or the European Research Council. Neither the European
	Union nor the granting authority can be held responsible for them. J. A. G. is supported
	by the EPSRC Centre for Doctoral Training in Mathematical Modelling, Analysis and
	Computation (MAC-MIGS) funded by the UK Engineering and Physical Sciences Research
	Council (grant EP/S023291/1), Heriot-Watt University and the University of Edinburgh.
} 
	
	\section{Hamiltonian Equations}\label{sec::Hamiltonian-Form}
	Consider the infinite-dimensional phase space $ M $ with coordinates $ \mu_k $ and momenta $ \nu_k $, for $ k\in\mbbZ^d $. Then the Hamiltonian is a function $ H:M\rightarrow \mbbR $ and the solution to the Hamiltonian system is a curve $ \rbr{\mu_k(t),\nu_k(t)} $ in $ M $ which obeys the Hamiltonain equations of motion
	\begin{equation}\label{eq:geo-struc-det}
		\dot{\mu}_k = \frac{\pa H}{\pa \nu_k},\quad \dot{\nu}_k = \frac{\pa H}{\pa \mu_k}.
	\end{equation}  
	When the Hamiltonian is time independent, $ \pa H/\pa t = 0  $, the value of the function $ H(\mu,\nu) $ is conserved over the evolution of the solutions of the equation. This conservation law can also be expressed via the symplectic form,
	\begin{equation}\label{eq:symplectic-form}
		\omega(\mu,\nu)=\int d\mu(t)\wedge d\nu(t)dx = \int d\mu_0\wedge d\nu_0 dx,\quad\forall t\geq0.
	\end{equation}
	The pair $ (M,\omega) $ is a symplectic manifold. This identity corresponds to the conservation of energy in physical systems.
	\begin{remark}
		The evenness of the dimensionality of the phase space is a corrolary of the well know theorem of Darboux which states that any point of $ M $ has local coordinates $ (\mu,\nu) = \rbr{\set{\mu_k}_{k\in\mbbZ^d},\set{\nu_k}_{k\in\mbbZ^d}} $, in which the symplectic structure has the form $ \omega=\sum_{k}d\mu_k\wedge\d\nu_k $. These are called the symplectic or canonical coordinates.
	\end{remark} 
	For a smooth function $ f\in\mcC^{\infty}(M) $ the symplectic form associates a smooth vector field $ X_f $ on $ M $ implicitly defined by $ \omega(X_f,\cdot) =df $ where $ df $ is the exterior derivative of $ f $.
	This definition allows us to introduce the Poisson bracket as an operation on functions, that is, as the symplectic form $ \omega(\mu,\nu) = \sum_{k}\mu_k\wedge \nu_k $ between two Hamiltonian vector fields $ X_F $ and $ X_G $
	\begin{equation*}
		\set{F,G} = \omega(X_F,X_G).
	\end{equation*}
	\begin{definition}
	The Poisson bracket, defined as a differential operator on functions $ f,g:M\rightarrow \mbbR $, is given in canonical coordinates by: 
	\begin{equation*}\label{def:poisson-brackets} \set{f,g} = i \sum_{k\in\mbbZ^d} \left( \partial_{\mu_k} f \partial_{\nu_k} g - \partial_{\mu_k} g \partial_{\nu_k} f  \right) \end{equation*}
	\end{definition}
	Furthermore, a Hamiltonian vector field $ X_F $ can be applied as a directional derivative to a sufficiently smooth function $ g $
	\begin{equation*}
		X_F(g) = \set{g,F}.
	\end{equation*}
	Specifically the vector field $ X_H $ associated to a Hamiltonian $ F $ is defined as 
	\begin{equation*}
		X_F = \rbr{\frac{\pa H}{\pa \mu_k},-\frac{\pa F}{\pa \nu_k}} = JdF
	\end{equation*}
	where  \[J = \begin{pmatrix} 0 & I_k \\ -I_k & 0 \end{pmatrix},\quad dF = \begin{pmatrix}
		\frac{\pa F}{\pa \mu_k} \\ \frac{\pa F}{\pa \nu_k}.
	\end{pmatrix} \]
	In this formulation for example if $ F=\mu_k $ then $ X_F = \frac{\pa }{\pa\nu_k} $
	This allows us to write the following definition:
	\begin{definition}[Hamiltonain Taylor Expansion]
	We define the Taylor expansion of $ g $ along the flow generated by the Hamiltonian vector field by
	\begin{equation*}
		g\circ F = g+\sum_{n=1}^N\frac{\set{g,F}^{n}}{n!} + R_{N+1}
	\end{equation*}
	where the remainder term is,
	\begin{equation}
		R_n(t) = \int_0^1\frac{\set{f,H}^{n+1}}{(n+1)!}(1 - s)^{n+1}ds.
	\end{equation}
	\end{definition}
	\begin{notation}
	Here we use the notation
	\begin{equation*}
		\set{X,Y}^n = \set{\set{\dots\set{X,Y},\dots, Y},Y}
	\end{equation*}
	where the right hand side has $ n $ nested brackets.
	\end{notation}
	Next we will introduce the cubic Schr\"odinger equation as an example of a Hamiltonain PDE exhibiting a conserved symplectic structure
	\subsection{The Cubic Schr\"odinger Equation}
	As a running example, we consider the defocusing cubic Schrödinger equation on the $d-$dimensional torus $\mathbb{T}^d$ given by:
	\begin{align} 
		\begin{cases}
			i \partial_t u + \Delta  u  = | u |^{2} u \\
			u |_{t = 0} = u_0,
		\end{cases}
		\quad (x, t) \in \mathbb{T}^d \times \mathbb{R}_+.
		\label{NLS1}
	\end{align}
	This equation has two conserved quantities. The first is the mass described by the $ L^2 $ norm
	\begin{equation*}\label{eq::mass}
		\norm{u}_{L^2}^2 = \sum_{k\in\mbbZ^d}|u_k|^2.
	\end{equation*}
	The second conserved quantity is the Hamiltonian which in physical space is: 
	\begin{equation*}
		\mathscr{H}_0 = \frac12 \norm{ \lap u}^2_{L^2} + \frac14||u||^4_{L^2},
	\end{equation*}
	which can likewise be written under the Fourier transform as 
	\begin{equation}\label{eq::Hamiltonian}
		\msH_0(u,\bar{u})=\frac{i}{2}\sum_{k\in\mbbZ^d}\abs{k}^2\abs{u_k}^2+\frac{i}{4}\sum_{\substack{\bk \in (\mbbZ^{d})^4 }}u_{k_1}\bar{u}_{k_{2}}u_{k_3}\bar{u}_{k_4}.
	\end{equation}	 
	For the Schr\"odinger equation with integer nonlinearity the Fourier coefficients introduced by the nonlinear term in the Hamiltonian are zero unless  $ k_1-k_2+k_3-k_4=0 $. This is the result of the Fourier transform 
	\begin{equation*}
		\mcF u = \sum_{k\in\mbbZ^d}u_ke^{ikx},\quad u_k = \frac{1}{2 \pi} \int_0^{2 \pi} u e^{-ikx}dx.
	\end{equation*}
	From this the nonlinear or interaction part of the Hamiltonian becomes
	\begin{equation*}
		H_1 = \frac{i}{4}\sum_{\substack{\bk \in(\mbbZ^{d})^4 }}\mcG_{1234}u_{k_1}\bar{u}_{k_{2}}u_{k_3}\bar{u}_{k_4}
	\end{equation*}
	where 
	\begin{equation*}
		\mcG_{1234}=\int_0^{2\pi}{e^{i(k_1-k_2+k_3-k_4)x}}dx=\begin{cases}2\pi &k_1-k_2+k_3-k_4=0 \\ 0 &\text{otherwise.}\end{cases}
	\end{equation*}
	From now on for clarity of presentation, we will assume this condition to hold in the case of our Hamiltonain, and later the symplectic transforms introduced to eliminate non-resonant terms.  The Hamiltonian \ref{eq::Hamiltonian} allows us to write the nonlinear Sch\"odinger equation in its Hamiltonain form as
	\begin{equation}\label{eq:HamiltonianFormulation}
		\partial_t u_k = \pa_{\bar{u}_k} \msH_0(u, \bar{u}), \quad 	\partial_t \bar{u}_k = - \pa_{u_k}\msH_0(u,\bar{u}).
	\end{equation}
	We denote the phase function on $ \bk \in (\mbbZ^{d})^{2n} $ for a given term in the Hamiltonian by
	\begin{equation*}
		\Phi_n(\bk) = \sum_{i=1}^{2n}(-1)^{i-1}k_i^2.
	\end{equation*}
	For instance the phase function corresponding to the quartic term in the Hamiltonian \ref{eq::Hamiltonian} is 
	\begin{equation*}
		\Phi_2(\bk) = k_1^2 -k_2^2 + k_3^2 -k_4^2.
	\end{equation*}
	We define the set of frequencies for which this is `nearly resonant' by
	\begin{equation*}
		\mcR_{n;N} = \set{\boldsymbol{k}\in (\mbbZ^d)^{2n}: \abs{\Phi_n(\bk)} \leq N}.
	\end{equation*}
	A special case of this is when the phase is identically zero. In that case we have 
	\begin{equation*}
		\Phi_2(\bk)=0,\quad k_1-k_2+k_3-k_4=0,
	\end{equation*}	
	meaning that the terms must be paired together. We do not restrict ourselves to this setting because almost-resonance as defined above is very useful, for instance when bounding Sobolev norms in \cite{CKO12}. By convention we will just refer to resonant terms when we mean almost resonant, with `purely resonant' as a special case. The indicator function over the resonant frequencies $ \mcR_n $ is written
	\begin{equation*}
		\one_{\mcR_{n}}(\bk;N) = \begin{cases}1,\quad \abs{\Phi_n(\bk)} \leq N \\ 0,\quad \abs{\Phi_n(\bk)}> N.\end{cases}
	\end{equation*}
	\begin{definition}\label{def::filter-function}
		With the phase and indicator function defined as above we will introduce functions of the following form to filter out resonant terms in the Birkhoff normal form,
		\begin{equation}
			\mcF_{n,N}(\bk) = \frac{\one_{\mcR_n^c}(\bk)}{2\Phi_n(\bk)} = \begin{cases}\frac{1}{2\Phi_n(\bk)},\quad &\abs{\Phi_n(\bk)} > N \\ 0,\quad &\text{otherwise.}\end{cases}
		\end{equation}
	\end{definition}
	These definitions provide us with the ingredients required to explain the Birkhoff normal form in the case of \eqref{NLS1}. This process easily generalises to other Hamiltonian PDEs. 
	\section{The Birkhoff Normal Form Reduction}\label{sec::Birkhoff-Normal-Form}
	We have the Hamiltonian 
	\begin{equation*}
		\msH_0=H_0 + H_1 = \frac{i}{2}\sum_{k\in\mbbZ^d}\abs{k}^2\abs{u_k}^2+\frac{i}{4}\sum_{\bk\in(\mbbZ^{d})^4}u_{k_1}\bar{u}_{k_{2}}u_{k_3}\bar{u}_{k_4}.
	\end{equation*}
	The algorithm works by splitting $ H_1 $ into resonant part $	H_1^{\text{\tiny{res}}}$  and non-resonant part  $	H_1^{\text{\tiny{non-res}}}$ before composing with a symplectic transform that removes the non-resonant terms. For the first iteration we write
	\begin{equation*}
		H_1 = H_1^{\text{\tiny{res}}} + H_1^{\text{\tiny{non-res}}},
	\end{equation*}
	where
		\begin{equation*}
			\begin{aligned}
		H_1^{\text{\tiny{res}}} &= \frac{i}{4} \sum_{\substack{\bk\in (\mbbZ^{d})^4\\ \Phi_2(\bk)\leq N}} u_{k_1}\bar{u}_{k_{2}}u_{k_3}\bar{u}_{k_4}, \\	H_1^{\text{\tiny{non-res}}} &= \frac{i}{4} \sum_{\substack{\bk\in (\mbbZ^{d})^4\\  \Phi_2(\bk) > N}} u_{k_1}\bar{u}_{k_{2}}u_{k_3}\bar{u}_{k_4},
		\end{aligned}
	\end{equation*} 
	and
	\begin{equation*}
	\Phi_2(\bk) = k_1^2 - k_2^2 + k_3^2 - k_4^2. 
	\end{equation*}
	The complete Hamiltonian can now be written as
	\begin{equation}\label{eq::res-decomp-Hamiltonian}
		\msH_0 =\frac{i}{2}\sum_{k\in\mbbZ^{d}}\abs{k}^2\abs{u_k}^2+ \frac{i}{4} \sum_{\substack{\bk\in(\mbbZ^{d})^4\\ \Phi_2(\bk)>N}} u_{k_1}\bar{u}_{k_{2}}u_{k_3}\bar{u}_{k_4} + \frac{i}{4} \sum_{\substack{\bk\in(\mbbZ^{d})^4\\ \Phi_2(\bk)\leq N}} u_{k_1}\bar{u}_{k_{2}}u_{k_3}\bar{u}_{k_4}.
	\end{equation}
	We then introduce a symplectic transform $ \Gamma_{F_1} $ such that
	\begin{equation*}  \set{H_0,F_1} = -H_1^{\text{\tiny{non-res}}}.
		\end{equation*}
		 Such a transform can be written as
	\begin{align}\label{eq::F1}
		F_1 &=-\sum_{\bk\in(\mbbZ^{d})^4}\mcF_{2,N}(\bk)u_{k_1}\bar{u}_{k_2}u_{k_3}\bar{u}_{k_4}\notag\\ &:= -H_{1,\Phi}^{\text{\tiny{non-res}}}.
	\end{align} 
	The second line is a convenient notation for the the first which follows from definition \ref{def::filter-function} with,
	\begin{equation*}
		\mcF_{2,N}(\bk) =  \frac{\one_{\mcR_{2,N}^c}(\bk)}{2 \Phi_2(\bk)}.
	\end{equation*}
%	\begin{remark}
%		We will assume that
%		\begin{equation*}
%			\norm{u}_{a,p}_{L^2}\leq K_1,\quad \norm{u}_{a,p}_{H^1}\leq K_2
%		\end{equation*} 
%		and use the conservation of mass long with the fact from \cite{Bou04} that
%		\begin{equation*}
%			\norm{F} = \sup\sum_{k\in\mbbZ^{2n}}|u_{k_1}||\bar{u}_{k_{2}}|\cdots |u_{k_{2n-1}}||\bar{u}_{k_{2n}}|\leq K\norm{u}_{a,p}_{L^2}^{2n}
%		\end{equation*} 
%		where the supremum is taken over all and \cite[Proposition 2.2]{CKO12} which says
%		\begin{equation*}
%			\norm{\set{H_1,H_2}}\leq\norm{H_1}\norm{H_2}
%		\end{equation*} to construct a norm on the remainder of the Poisson bracket expansions in terms of $ \norm{u}_{a,p}_{L^2} $.
%	\end{remark}
	\
		It's easy to check that with $ F_1 $ defined above the identity $ \set{H_0,F_1} = -H_1^{\text{\tiny{non-res}}} $ holds. One has
		\begin{align}\label{eq::F1}
			\set{H_0, F_1} &=  -\frac{i}{2}\sum_{k \in \mbbZ^{d}} \sum_{\bk\in(\mbbZ^{d})^4}\mcF_2(\bk)\left( \pa_{u_{k}} (|k|^2 |u_{k}|^2) \pa_{\bar u_{k}}(u_{k_1}\bar{u}_{k_2}u_{k_3}\bar{u}_{k_4}) \right. \notag\\ & \left. - \pa_{\bar{u}_{k}} (|k|^2 |u_{k}|^2) \pa_{ u_{k}}(u_{k_1}\bar{u}_{k_2}u_{k_3}\bar{u}_{k_4}) \right) \notag\\
			&= - \frac{i}{2} \sum_{\bk\in (\mbbZ^{d})^4}(k_1^2 - k_2^2 + k_3^2 - k_4 ^2)\mcF_2(\bk)u_{k_1}\bar{u}_{k_2}u_{k_3}\bar{u}_{k_4} \notag\\
			&= -  \frac{i}{4}\sum_{\bk\in (\mbbZ^{d})^4}\one_{\mcR_{2,N}^c}(\bk)u_{k_1}\bar{u}_{k_2}u_{k_3}\bar{u}_{k_4} \notag \\ 
			&= -  \frac{i}{4} \sum_{\substack{\bk\in (\mbbZ^{d})^4\\ \Phi_2(\bk)>N}}u_{k_1}\bar{u}_{k_2}u_{k_3}\bar{u}_{k_4} = -H_1^{\text{\tiny{non-res}}}.
		\end{align}
		When we write $ F_1 = H_{1,\Phi}^{\text{\tiny{non-res}}} $ the subscript of $ \Phi $ refers to the phase function introduced by the laplacians in the kinetic part of the Hamiltonain in the Poisson bracket $ \set{H_0,F_1} $, which we will see used frequently when defining $ F_i $. 

	The Birkhoff normal form is constructed as a sequence of Taylor expansions along the flows of each $ F_i $, which by construction leaves only resonant terms and a non-resonant remainder. We will now introduce a norm for quantifying the order of terms in the Hamiltonain Taylor expansion which is introduced in \cite{KP96}. 
	\begin{definition}[Sobolev Norm on Sequences]\label{def::norm}
		Define the space $ \ell^{2,p}_b $ to be the set of bi-infinite sequences such that
		\begin{equation*}
			\norm{u}_{a,p}^2 = \abs{q_0}^2+\sum_{k\in \mbbZ^{d} }\abs{u_k}^2\abs{k}^{2p}e^{2\abs{k}a}<\infty.
		\end{equation*} 
	\end{definition}
	We also make use of \cite[Lemma 2]{KP96} which is the following bilinear estimate.
	\begin{lemma}
		For $ a\geq 0 $ and $ p\geq\frac12 $, the space $ \ell^{a,p}_b $ is a Hilbert algebra with respect to the convolution of sequences, and 
		\begin{equation*}
			\norm{u\star u'}_{a,p}\leq c\norm{u}_{a,p}\norm{u'}_{a,p},
		\end{equation*}
		where $ c $ depends on $ p $ only.
	\end{lemma}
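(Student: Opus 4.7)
The plan is to reduce the bilinear estimate to a standard application of Young's convolution inequality after exploiting the sub-multiplicativity of the weight $w(k) := |k|^p e^{a|k|}$. First I would separate out the $|q_0|$ contribution (the zero mode), which behaves trivially, and focus on bounding
\[
\sum_{k \in \mbbZ^d \setminus \{0\}} w(k)^2 \,|(u \star u')_k|^2.
\]
Using $|k| \leq |j| + |k-j|$, one gets the elementary bound $|k|^p \leq 2^p(|j|^p + |k-j|^p)$ together with $e^{a|k|} \leq e^{a|j|}e^{a|k-j|}$, so that
\[
w(k) \leq 2^p \bigl(w(j)\, e^{a|k-j|} + w(k-j)\, e^{a|j|}\bigr).
\]
This is the only structural inequality I would need about the weight.

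Next I would apply this pointwise to $(u \star u')_k = \sum_j u_j u'_{k-j}$ and bound $w(k)|(u\star u')_k|$ by a sum of two symmetric convolutions of the form $(w|u|) \star (e^{a|\cdot|}|u'|)$ and its mirror. For each such term Young's inequality gives
\[
\bigl\| (w|u|) \star (e^{a|\cdot|}|u'|) \bigr\|_{\ell^2(k)} \leq \bigl\|w|u|\bigr\|_{\ell^2} \cdot \bigl\|e^{a|\cdot|}|u'|\bigr\|_{\ell^1}.
\]
The $\ell^2$ factor is exactly $\norm{u}_{a,p}$ up to the $|q_0|$ piece. The $\ell^1$ factor is controlled by Cauchy--Schwarz, pairing the compensating weight $|k|^{-p}$ with the weighted modulus:
\[
\sum_{k \neq 0} e^{a|k|}|u'_k| \leq \Bigl(\sum_{k\neq 0} |k|^{-2p}\Bigr)^{1/2} \norm{u'}_{a,p}.
\]
The series in brackets converges as soon as $2p > d$, yielding a constant $c=c(p,d)$. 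Symmetrizing to recover the second mirror term and combining with the trivial contribution of $|q_0|$ completes the bilinear estimate, and the algebra property follows immediately.

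The main obstacle is the borderline case $p = \tfrac12$ in dimension $d=1$, where $\sum |k|^{-1}$ diverges logarithmically. To close the estimate at the threshold I would refine the Cauchy--Schwarz step via a dyadic decomposition of frequencies, splitting the convolution into the regime $|j| \lesssim |k-j|$ (where the weight concentrates on the first factor) and its symmetric counterpart, so that on each piece one only needs the summability of $w(j)^{-2}w(k)^2 w(k-j)^{-2}$, which benefits from the strict positivity of the exponential weight when $a > 0$. In the pure polynomial case ($a=0$) at the endpoint, a more delicate argument exploiting the $\ell^2$ structure directly (à la Sobolev embedding in the discrete setting) is needed; this is essentially the content of Lemma~2 in \cite{KP96}.
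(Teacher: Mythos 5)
The paper does not prove this lemma at all: it is imported verbatim as \cite[Lemma 2]{KP96}, so there is no internal proof to compare against. Judged on its own terms, your main line of argument is the standard (and essentially the Kuksin--P\"oschel) proof: sub-multiplicativity of the weight $w(k)=|k|^pe^{a|k|}$ via $|k|^p\leq 2^p(|j|^p+|k-j|^p)$ and $e^{a|k|}\leq e^{a|j|}e^{a|k-j|}$, followed by Young's inequality $\ell^2\star\ell^1\to\ell^2$ and Cauchy--Schwarz on the $\ell^1$ factor. This is correct and complete whenever $\sum_{k\neq 0}|k|^{-2p}<\infty$, i.e.\ for $2p>d$, and the zero-mode bookkeeping you sketch does work since $\norm{\cdot}_{\ell^2}\leq\norm{\cdot}_{a,p}$. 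Note only that your constant then depends on $d$ as well as $p$; in the original \cite{KP96} setting $d=1$, so this is harmless.

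The genuine gap is your treatment of the remaining range of $p$. As printed, the lemma asserts the estimate for all $p\geq\tfrac12$ over $\mbbZ^d$, and your argument does not reach $\tfrac12\leq p\leq d/2$. The proposed repair does not close this: the dyadic refinement is only gestured at, the appeal to ``strict positivity of the exponential weight when $a>0$'' cannot help since the hypothesis allows $a=0$, and the final sentence defers the hard case to Lemma~2 of \cite{KP96} --- which is the statement you are trying to prove, so the argument becomes circular exactly where it matters. In fact no argument can close it as stated: for $a=0$, $d=1$, $p=\tfrac12$ the claim is the algebra property of $H^{1/2}(\mathbb{T})$ under multiplication, which fails ($H^s(\mathbb{T}^d)$ is an algebra only for $s>d/2$). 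The resolution is that the hypothesis should read $p>\tfrac12$ with $d=1$ (as in \cite{KP96}); under that reading your main argument is a complete proof, and you should say so explicitly rather than leave an unprovable endpoint dangling.
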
	
	In this norm the interaction Hamiltonian of the cubic Schr\"odinger equation is $ H_1=\mcO(\norm{u}_{a,p}^4) $. There are other norms on iterated Poisson brackets such as the one used by Bourgain in \cite[Section 3]{Bou04}, used also in \cite[Equation 2.12]{CKO12}. 
	By composing $ H $ with this transform we obtain the first iteration of the Birkhoff normal form,
	\begin{align*}
		\msH_1&=H\circ F_1 \\
		&= H_0 + H_1 + \set{H_0,F_1} + \set{H_1, F_1} + \frac12\set{\set{H_0, F_1},F_1} \\
		&\quad + \int_0^1(1-s)\set{\set{H_1,F_1},F_1}ds \\
		&= H_0 + H_1^{\text{\tiny{res}}} + \set{H_1, F_1} + \frac12\set{\set{H_0, F_1},F_1}  + \mcO(\norm{u}^8_{a,p})
%		\int_0^t(1-s)\set{\set{H_1,F_1},F_1}ds
	\end{align*}
	where we have constructed the desired cancellation 
	\begin{equation*}
		\set{H_0,F_1} = -H_1^{\text{\tiny{non-res}}}.
	\end{equation*}
	Now we look at the lowest order terms which still contains non-resonant frequencies which are
	\begin{equation*}
		\set{H_1, F_1},\quad \frac12\set{\set{H_0,F_1},F_1}.  
	\end{equation*}
	We introduce a second symplectic transform $ \Gamma_{F_2} $ characterised by the property that 
	\begin{align*}
		-\set{H_0,F_2} &=\set{H_1,F_1}^{\text{\tiny{non-res}}}+\frac12\set{\set{H_0,F_1},F_1}^{\text{\tiny{non-res}}} \\
		&= \set{H_1,H_{1,\Phi}^{\text{\tiny{non-res}}}}^{\text{\tiny{non-res}}}+\frac12\set{\set{H_0,H_{1,\Phi}^{\text{\tiny{non-res}}}},H_{1,\Phi}^{\text{\tiny{non-res}}}}^{\text{\tiny{non-res}}} 
	\end{align*}
	which gives
	\begin{equation}\label{eq::F2}
		F_2 = \set{H_1,H_{1,\Phi}^{\text{\tiny{non-res}}}}^{\text{\tiny{non-res}}}_{\Phi}+\frac12\set{\set{H_0,H_{1,\Phi}^{\text{\tiny{non-res}}}},H_{1,\Phi}^{\text{\tiny{non-res}}}}^{\text{\tiny{non-res}}}_{\Phi},
	\end{equation}
	where the subscript $ \Phi $ on the bracket is the $ \mcO(\norm{u}_{a,p}^6) $ phase introduced by the bracket of $ F_2 $ against $ H_0 $, while the one on the Hamiltonian in the bracket is phase corresponding to $ F_1 $ which is calculated in \eqref{eq::F1}. Composing with the new transform gives
	\begin{align*}
		\msH_2 &= \msH_1\circ F_2 \\
		&= H_0 + H_1  + \set{H_0, F_1} + \set{H_1, F_1} + \frac12\set{\set{H_0,F_1},F_1} + \set{H_0, F_2}\\
		&\quad + \frac12\set{\set{H_1,F_1},F_1} + \frac16\set{\set{\set{H_0,F_1},F_1},F_1} + \set{H_1,F_2} + \set{\set{H_0,F_1},F_2} \\
		&\quad + \int_0^1(1-s)\set{\set{H_1,F_1}, F_2}ds \\
		&= H_0 + H_1^{\text{\tiny{res}}} + \set{H_1,F_1}^\text{\tiny{res}} + \frac12\set{\set{H_1,F_1},F_1} + \set{H_1^\text{\tiny{res}},F_2}  \\
		&\quad + \mcO(\norm{u}_{a,p}^{10})
%		\int_0^t(1-s)\set{\set{H,F_1}, F_2}ds.
	\end{align*}
	Likewise, introducing a third symplectic transform $ \Gamma_{F_3} $ defined in order to remove the lowest order non-resonant frequencies in $ \msH_2 $ gives the Hamiltonian
	\begin{align*}
		\msH_3 &= \msH_2\circ F_3 \\
		&= H_0 + H_1^{\text{\tiny{res}}} + \set{H_1,F_1}^\text{\tiny{res}} + \set{H_1^\text{\tiny{res}},F_2}^\text{\tiny{res}}+\frac12\set{\set{H_1,F_1},F_1}^{\text{\tiny{res}}}  \\
		&\quad+\set{H_1^{\text{\tiny{res}}},F_3} + \set{\set{H_1,F_1}^{\text{\tiny{res}}},F_2} + \frac16\set{\set{\set{H_1,F_1}, F_1}, F_1} \\
		&\quad + \mcO(\norm{u}_{a,p}^{12}).
	\end{align*}
	Notice that the three leading terms are now resonant and each succeeding term has non-resonant components. At the $ nth $  iteration of the Birkhoff normal we remove order$-(2n+2) $ non-resonant terms implying that $ F_n \sim \mcO(\norm{u}_{a,p}^{2n+2})$. We wish to introduce notation for the remainder terms in the Birkhoff normal form.
	\begin{definition}[Truncation Term]\label{def::truncation-term}
		Let $ s_n^m $ be the set of non-decreasing sequences of natural numbers less than and including $ n $ whose terms sum to $ m $, and let $ c_{z} $ be a coefficient determined by the number of repetitions of a number in the sequence and $ q(z) $ the length of the sequence. Then
		\begin{equation*}
			R_n^m(\cdot) = \sum_{z\in s_n^m} \frac{\set{\cdot,F_{z}}^{q(z)}}{c_{z}}
		\end{equation*}
		represents the sum of all possible $ \mcO(\norm{u}_{a,p}^{r+2m}) $ Poisson brackets made up of symplectic transforms $ F_i $, where $ r $ is the order of the monomials in the Hamiltonian taking the place of $ \cdot $.
	\end{definition}
	\begin{remark}
		Definition \ref{def::truncation-term} formally describes the lowest order terms  still containing non-resonant frequencies but we don't know a priori the structure of $ F_i $ so while \ref{def::truncation-term} we haven't yet arrived at an explicit description of the terms in the normal form.
	\end{remark}
	\begin{example}
	 	If $ N=3 $ and $ n=3 $ then we have $ z_1 = (1,1,1)  $, $ z_2 = (1,2) $, and $ z_3 = (3) $ hence $ c_{z_1} = 3! $, $ c_{z_2} = c_{z_3} = 1 $ while $ q(z_1) = 3, q(z_2)=2,$ and $q(z_3)=1 $. This gives
		\begin{equation*}
			R_3^3(H) = \sum_{z\in s_3} \frac{\set{H,F_{z}}^{q(z)}}{c_z} = \frac16\set{\set{\set{H,F_1},F_1},F_1} + \set{\set{H,F_1},F_2}+\set{H,F_3}.
		\end{equation*}
		We can see that these are all of the $ \mcO(\norm{u}_{a,p}^8) $ and $ \mcO(\norm{u}_{a,p}^{10}) $ terms for $ H_0 $ and $ H_1 $ respectively.
	\end{example}
	\begin{definition}[Truncated Normal Form]
		We define $ \msH_n^m  $ to be the $ n$-th iteration of the Birkhoff normal form truncated at order $ 2m $. In general, if we denote by $ \msH_n^{\text{\tiny{res}}} $ the resonant terms in $ \msH_n $, which we know are at most $ \mcO(\norm{u}_{a,p}^{2+2n}) $, then we can write the following
		\begin{equation*}
			\msH_n = \msH_n^{\text{\tiny{res}}} + \sum_{m = n+1}^\infty R_n^m(H),
		\end{equation*}
		and further we can write a truncation independent of the iteration of the Birkhoff normal form
		\begin{equation*}
			\msH_n^\ell = \msH_n^{\text{\tiny{res}}} + \sum_{m = n+1}^{\ell} R_n^m(H).
		\end{equation*}
	\end{definition}
	With these definitions we can clarify the form of the symplectic transforms $ F_i $ used to eliminate the non-resonant terms. The particular structure of each term is recursive because it can  be defined as a combination of Hamiltonians and brackets occurring in previous terms. This recursive formula for $ F_i $ is motivated by the fact that 
	\begin{equation*}
		F_n = -\mcF_{n+1,N}(\bk) \rbr{R_{n-1}^{n}(H_0) + R_{n-1}^{n-1}(H_1)}
	\end{equation*}
	which is the sum of non-resonant terms of order $ 2+2n $. It's easy to verify that \eqref{eq::F1}, \eqref{eq::F2} and $ F_3 $ below satisfy this formula. 
	\begin{align*}
		F_3 &= \frac12\set{\set{H_1,F_1},F_1}_{\Phi}^{\text{\tiny{non-res}}} + \set{H_1^{\text{\tiny{res}}},F_2}_{\Phi}^{\text{\tiny{non-res}}} \\
		&\quad + \frac16\set{\set{\set{H_0,F_1}, F_1}, F_1}^{\text{\tiny{non-res}}}_{\Phi} +\set{\set{H_0,F_1}, F_2}^{\text{\tiny{non-res}}}_{\Phi},
	\end{align*}
	which by substitution becomes the following recursive expression:
	\begin{align*}
		F_3 &= \frac12\set{\set{H_1,H_{1,\Phi}^{\text{\tiny{non-res}}}},H_{1,\Phi}^{\text{\tiny{non-res}}}}^{\text{\tiny{non-res}}}_{\Phi} + \set{H_1^{\text{\tiny{res}}},\set{H_1,H_{1,\Phi}^{\text{\tiny{non-res}}}}_{\Phi}^{\text{\tiny{non-res}}}}_{\Phi}^{\text{\tiny{non-res}}} \\
		&\quad+\frac16\set{\set{\set{H_0,H_{1,\Phi}^{\text{\tiny{non-res}}}}, H_{1,\Phi}^{\text{\tiny{non-res}}}}, H_{1,\Phi}^{\text{\tiny{non-res}}}}^{\text{\tiny{non-res}}}_{\Phi}\\ &\quad +\set{\set{H_0,H_{1,\Phi}^{\text{\tiny{non-res}}}}, \set{H_1,H_{1,\Phi}^{\text{\tiny{non-res}}}}^{\text{\tiny{non-res}}}_{\Phi} }^{\text{\tiny{non-res}}}_{\Phi}.
	\end{align*}
	The implication is that we can write the entire reduced Hamiltonian to any order in terms of only $ H$ and $ H_{1,\Phi}^{\text{non-res}}$. This observation motivates us to encode the iterative structure of the above terms as trees. In the next section we will introduce a decorated tree formalism to encode these expansions to provide an explicit formula of the Birkhoff normal form to any order.
\section{Decorated trees formulation}\label{sec::tree-formulation}
In this section, we introduce a decorated tree formalism for encoding iterated Poisson Lie Brackets. The decorations will be only on the nodes and will allow us to describe resonant and non-resonant terms.

We suppose a finite set of decorations $ I = \set{\circ, k, n, r} $ and we consider decorated trees whose nodes are decorated by elements of $I$. Such a tree denoted by $T^{\mathfrak{n}}$ satisfies:
\begin{itemize}
	\item $T$ is a planar binary rooted tree  with root node $ \rho_T $, edge set $ E_T $ and node set $ N_T $. 
	\item The map $\mathfrak{n} : N_T \rightarrow I$ are node decorations.
	We assume several constraints on the node decorations:
	\begin{enumerate}[label=(\alph*)]
		\item A node decorated by $ n $ is connected to its parent in the tree only on the right hand branch.
		\item A node decorated by $ r $ or $\circ$ is connected to its parent in the tree only on the left hand branch.
		\item The decoration $k$ only appears at the leaves and it is always connected to a node decorated by $\circ$ which is not at the root.
		\end{enumerate}
\end{itemize}
Given a decorated tree $T$, we define $ |T| $ as
\begin{equation*}
	| T | =  2 |L_T^k| +  4 \sum_{i \in I \setminus \set{k}} |L_T^i| -  |E_T|
	\end{equation*}
	where $L_T^i$ are the leaves of $T$ decorated by $i$ and $ |L_T^i| $ is its cardinal. The term $|E_T|$ is the cardinal of $E_T$. In the sequel leaves decorated by $k$ will be interpreted as $H_0$ which is a monomial of degree $2$ whereas the other leaves will be interpreted as $H_1$ or a modification of $H_1$ with degree $4$.
	The inner nodes encode Poisson Brackets that decrease the size of the monomials by $2$, one for each incoming edge. 
	Therefore, $|T|$ computes the degree of the monomial associated with $T$.
	
 As a pictorial notation, decorated nodes will be denoted by $  	\begin{tikzpicture}[node distance=14pt, baseline=5]
	\node[leaf] (leaf) at (0,0.3) {$i $};
\end{tikzpicture}  $ with $i \in I$ and edges will be brown edges. We put more constraints on the node decorations of $T^{\mathfrak{n}}$ in the next definition.
\begin{assumption} \label{assumption_1}
 Every node $v$ in $T$ has the following form
	\begin{equation*}
		\begin{tikzpicture}[node distance = 14pt, baseline=10]
			% Inner Nodes
			\node[leaf] (inner) at (0,0.3) {$i$};
			% Leaf Nodes
			\node[leaf] (leaf1) [above left of = inner] {$ T_1 $};
			\node[leaf] (leaf2) [above right of = inner] {$ T_2 $};
			% Edges
			\draw[H] (inner.north west) -- (leaf1.south east);
			\draw[H] (inner.north east) -- (leaf2.south west);
		\end{tikzpicture}.
		\end{equation*}
		Here, we have assumed that the node $v$ is decorated by $i \in I$. The trees $T_1$ and $T_2$ are subtrees of $T$ which are connected to the node $v$. By constraints (a),(b) and (c) respectively, the root of $T_2$ must be decorated by $n$ and the one of $T_1$ by $r,k$ or $\circ$.
        Then, one has 
        \begin{enumerate}[label=(\roman*)]
        	\item	If the root of $T_1$ is decorated by $\circ$, then $|T_1| \geq |T_2|$. Moreover, if $ T_1 $ is of the form
        	\begin{equation*}
        		\begin{tikzpicture}[node distance = 14pt, baseline=10]
        			% Inner Nodes
        			\node[leaf] (inner) at (0,0.3) {$j $};
        			% Leaf Nodes
        			\node[leaf] (leaf1) [above left of = inner] {$ T_4 $};
        			\node[leaf] (leaf2) [above right of = inner] {$ T_3 $};
        			% Edges
        			\draw[H] (inner.north west) -- (leaf1.south east);
        			\draw[H] (inner.north east) -- (leaf2.south west);
        		\end{tikzpicture}
        	\end{equation*}
        	then $ |T_3| \geq |T_2| $.       	
        	\item If the root of $T_1$ is decorated by $r$, then $|T_1| < |T_2|$. 
        \end{enumerate}
        \end{assumption}
  Below, we present some decorated trees satisfying the previous rules:
  \begin{equation*}
  T= 	\begin{tikzpicture}[node distance = 14pt, baseline=10]
  		% Inner Nodes
  		\node[leaf] (inner) at (0,0.3) {$\circ $};
  		% Leaf Nodes
  		\node[leaf] (leaf1) [above left of = inner] {$ \circ $};
  		\node[leaf] (leaf2) [above right of = inner] {$ n $};
  		% Edges
  		\draw[H] (inner.north west) -- (leaf1.south east);
  		\draw[H] (inner.north east) -- (leaf2.south west);
  	\end{tikzpicture}, \quad \bar{T} = \begin{tikzpicture}[node distance = 14pt, baseline=10]
  	% Inner Nodes
  	\node[leaf] (inner) at (0,0.3) {$r$};
  	% Leaf Nodes
  	\node[leaf] (leaf1) [above left of = inner] {$ \circ $};
  	\node[leaf] (leaf3) [above left of = leaf1] {$ k $};
  	\node[leaf] (leaf4) [above right of = leaf1] {$ n $};
  	\node[leaf] (leaf2) [above right of = inner] {$ n $};
  	% Edges
  	\draw[H] (inner.north west) -- (leaf1.south east);
  	\draw[H] (inner.north east) -- (leaf2.south west);
  	\draw[H] (leaf1.north east) -- (leaf4.south west);
  	\draw[H] (leaf1.north west) -- (leaf3.south east);
  	\end{tikzpicture}.
  \end{equation*}
  One has
  \begin{equs}
  	|T| & = 4 | L_T^{\circ} | + 4 | L_{T}^n | - |E_T| = 4 + 4 -2 = 6, \\ |\bar{T}| & = 2 | L_{\bar{T}}^{k} | + 4 | N_{\bar{T}}^{n} | - |E_{\bar{T}}| =  2 + 2 \times 4 - 4 = 6.
  \end{equs}
 One can easily check that the conditions on the decorations are satisfied on both decorated trees. We denote by $\CT$ the set of these decorated trees. We define recursively a map $\Pi$ that interpreted these decorated trees into iterated Poisson Lie Brackets. It is given by
 \begin{equation}
 	\label{rec_def_Pi}
 	\begin{aligned}
 	(\Pi \,	\begin{tikzpicture}[node distance=14pt, baseline=5]
 		\node[leaf] (leaf) at (0,0.3) {$k $};
 	\end{tikzpicture}) & = H_0, \quad (\Pi \,	\begin{tikzpicture}[node distance=14pt, baseline=5]
 	\node[leaf] (leaf) at (0,0.3) {$\circ $};
 	\end{tikzpicture}) = H_1, \quad 	(\Pi \,	\begin{tikzpicture}[node distance=14pt, baseline=5]
 	\node[leaf] (leaf) at (0,0.3) {$n $};
 	\end{tikzpicture}) = H_{1,\Phi}^{\text{\tiny{non-res}}}, \quad (\Pi \,	\begin{tikzpicture}[node distance=14pt, baseline=5]
 	\node[leaf] (leaf) at (0,0.3) {$r $};
 	\end{tikzpicture})  =  H_1^{\text{\tiny{res}}},
 	\\
 	(\Pi \, \begin{tikzpicture}[node distance = 14pt, baseline=10]
 		% Inner Nodes
 		\node[leaf] (inner) at (0,0.3) {$\circ$};
 		% Leaf Nodes
 		\node[leaf] (leaf1) [above left of = inner] {$ T_1 $};
 		\node[leaf] (leaf2) [above right of = inner] {$ T_2 $};
 		% Edges
 		\draw[H] (inner.north west) -- (leaf1.south east);
 		\draw[H] (inner.north east) -- (leaf2.south west);
 	\end{tikzpicture}) &= \set{  \Pi T_1, \Pi T_2 }, \quad (\Pi \, \begin{tikzpicture}[node distance = 14pt, baseline=10]
 	% Inner Nodes
 	\node[leaf] (inner) at (0,0.3) {$ n $};
 	% Leaf Nodes
 	\node[leaf] (leaf1) [above left of = inner] {$ T_1 $};
 	\node[leaf] (leaf2) [above right of = inner] {$ T_2 $};
 	% Edges
 	\draw[H] (inner.north west) -- (leaf1.south east);
 	\draw[H] (inner.north east) -- (leaf2.south west);
 	\end{tikzpicture}) = \set{  \Pi T_1, \Pi T_2 }^{\text{\tiny{non-res}}}_{\Phi},
 	\\ (\Pi \, \begin{tikzpicture}[node distance = 14pt, baseline=10]
 		% Inner Nodes
 		\node[leaf] (inner) at (0,0.3) {$ r $};
 		% Leaf Nodes
 		\node[leaf] (leaf1) [above left of = inner] {$ T_1 $};
 		\node[leaf] (leaf2) [above right of = inner] {$ T_2 $};
 		% Edges
 		\draw[H] (inner.north west) -- (leaf1.south east);
 		\draw[H] (inner.north east) -- (leaf2.south west);
 	\end{tikzpicture}) & = \set{  \Pi T_1, \Pi T_2 }^{\text{\tiny{res}}}.
 	\end{aligned}
 \end{equation}
 One also defines combinatorial coefficients associated with these iterated brackets.
\begin{equation}
	\label{rec_def_S}
	\begin{aligned}
		S^{j}( \,	\begin{tikzpicture}[node distance=14pt, baseline=5]
			\node[leaf] (leaf) at (0,0.3) {$k $};
		\end{tikzpicture} ) & = S^{j}(\,	\begin{tikzpicture}[node distance=14pt, baseline=5]
			\node[leaf] (leaf) at (0,0.3) {$ n $};
		\end{tikzpicture} ) = S^{j}( \,	\begin{tikzpicture}[node distance=14pt, baseline=5]
			\node[leaf] (leaf) at (0,0.3) {$\circ $};
		\end{tikzpicture} )  = S^{j}( \,	\begin{tikzpicture}[node distance=14pt, baseline=5]
			\node[leaf] (leaf) at (0,0.3) {$r $};
		\end{tikzpicture} )   = j+1, 
		\\
		S^j (\, \begin{tikzpicture}[node distance = 14pt, baseline=10]
			% Inner Nodes
			\node[leaf] (inner) at (0,0.3) {$\circ$};
			% Leaf Nodes
			\node[leaf] (leaf1) [above left of = inner] {$ T_1 $};
			\node[leaf] (leaf2) [above right of = inner] {$ T_2 $};
			% Edges
			\draw[H] (inner.north west) -- (leaf1.south east);
			\draw[H] (inner.north east) -- (leaf2.south west);
		\end{tikzpicture}) &=  (j+1) S^{j+1}(T_1) S^{0}(T_2), \, \text{if } T_1 = \begin{tikzpicture}[node distance = 14pt, baseline=10]
			% Inner Nodes
			\node[leaf] (inner) at (0,0.3) {$\circ$};
			% Leaf Nodes
			\node[leaf] (leaf1) [above left of = inner] {$ T_3 $};
			\node[leaf] (leaf2) [above right of = inner] {$ T_4 $};
			% Edges
			\draw[H] (inner.north west) -- (leaf1.south east);
			\draw[H] (inner.north east) -- (leaf2.south west);
		\end{tikzpicture}, \, |T_4| = |T_2|, \\ 
		& = (j+1) S^{0}(T_1) S^{0}(T_2), \, \text{otherwise},
		\\
		S^j ( \, \begin{tikzpicture}[node distance = 14pt, baseline=10]
			% Inner Nodes
			\node[leaf] (inner) at (0,0.3) {$ n $};
			% Leaf Nodes
			\node[leaf] (leaf1) [above left of = inner] {$ T_1 $};
			\node[leaf] (leaf2) [above right of = inner] {$ T_2 $};
			% Edges
			\draw[H] (inner.north west) -- (leaf1.south east);
			\draw[H] (inner.north east) -- (leaf2.south west);
		\end{tikzpicture}) & = S^j ( \, \begin{tikzpicture}[node distance = 14pt, baseline=10]
			% Inner Nodes
			\node[leaf] (inner) at (0,0.3) {$ r $};
			% Leaf Nodes
			\node[leaf] (leaf1) [above left of = inner] {$ T_1 $};
			\node[leaf] (leaf2) [above right of = inner] {$ T_2 $};
			% Edges
			\draw[H] (inner.north west) -- (leaf1.south east);
			\draw[H] (inner.north east) -- (leaf2.south west);
		\end{tikzpicture}) =  (j+1) \, S^{0}(T_1) S^{0}(T_2)
	\end{aligned}
\end{equation}
 and we set
 \begin{equation*}
 	S(T) = S^{0}(T).
 \end{equation*}
        As an example, one has
        \begin{equation*}
         S( 	\begin{tikzpicture}[node distance = 14pt, baseline=10]
        	% Inner Nodes
        	\node[leaf] (inner) at (0,0.3) {$\circ $};
        	% Leaf Nodes
        	\node[leaf] (leaf1) [above left of = inner] {$ \circ $};
        	\node[leaf] (leaf2) [above right of = inner] {$ n $};
        	% Edges
        	\draw[H] (inner.north west) -- (leaf1.south east);
        	\draw[H] (inner.north east) -- (leaf2.south west);
        \end{tikzpicture}) = 1, \quad S( \begin{tikzpicture}[node distance = 14pt, baseline=10]
        	% Inner Nodes
        	\node[leaf] (inner) at (0,0.3) {$r$};
        	% Leaf Nodes
        	\node[leaf] (leaf1) [above left of = inner] {$ \circ $};
        	\node[leaf] (leaf3) [above left of = leaf1] {$ k $};
        	\node[leaf] (leaf4) [above right of = leaf1] {$ n $};
        	\node[leaf] (leaf2) [above right of = inner] {$ n $};
        	% Edges
        	\draw[H] (inner.north west) -- (leaf1.south east);
        	\draw[H] (inner.north east) -- (leaf2.south west);
        	\draw[H] (leaf1.north east) -- (leaf4.south west);
        	\draw[H] (leaf1.north west) -- (leaf3.south east);
        \end{tikzpicture}) = 2 .
        \end{equation*}
        For the second example, one is able to find a path of length $3$ with nodes successively  decorated by $r,\circ,k$. This gives $2! =2$. The recursive definition \eqref{rec_def_S} produces factorial coefficients depending on the path inside the tree containing nodes decorated by $\circ$. If we consider a third tree $ T_3 $ for which $ S(T_3)=3! $ then we have 
        \begin{equation*}
        	S( \begin{tikzpicture}[node distance = 14pt, baseline=10]
        		% Inner Nodes
        		\node[leaf] (inner) at (0,0.3) {$r$};
        		% Leaf Nodes
        		\node[leaf] (leaf1) [above left of = inner] {$ \circ $};
        		\node[leaf] (leaf3) [above left of = leaf1] {$ k $};
        		\node[leaf] (leaf4) [above right of = leaf1] {$ n $};
        		\node[leaf] (leaf2) [above right of = inner] {$ T_3 $};
        		% Edges
        		\draw[H] (inner.north west) -- (leaf1.south east);
        		\draw[H] (inner.north east) -- (leaf2.south west);
        		\draw[H] (leaf1.north east) -- (leaf4.south west);
        		\draw[H] (leaf1.north west) -- (leaf3.south east);
        	\end{tikzpicture}) =2!3!
        \end{equation*} 
    	because the two paths are separated by the node decorated by $ n $ at the root of $ T_3 $ which is we know is $ n $ due to constraint (a).  In terms of the iterated brackets these paths coincide with the repetitions of particular symplectic transforms as described in definition \ref{def::truncation-term}. The following proposition provides a way to extract the Taylor coefficient of the corresponding term in the normal form from the tree.
        \begin{proposition} \label{prop_iterated_bracket} For $T_1,...,T_p$ decorated trees, one has
        	\begin{equation*}
        		\begin{aligned}
        			\frac{1}{p!}	\set{ \set{\frac{(\Pi T_1)}{S(T_1)}, \frac{(\Pi T_{2})}{S(T_2)}} \cdots, \frac{(\Pi T_p)}{S(T_p)}  } = \frac{(\Pi T)}{S(T)}, \quad T = \begin{tikzpicture}[node distance = 14pt, baseline=10]
        				% Inner Nodes
        				\node[leaf] (inner) at (0,0.3) {$\circ$};
        				% Leaf Nodes
        				\node[leaf] (leaf1) [above left of = inner] {$ \tiny{\cdots} $};
        				\node[leaf] (leaf3) [above left of = leaf1] {$ \circ $};
        				\node[leaf] (leaf5) [above left of = leaf3] {$ T_1 $};
        				\node[leaf] (leaf6) [above right of = leaf3] {$ T_2 $};
        				\node[leaf] (leaf2) [above right of = inner] {$ T_p $};
        				% Edges
        				\draw[H] (inner.north west) -- (leaf1.south east);
        				\draw[H] (inner.north east) -- (leaf2.south west);
        				\draw[H] (leaf1.north west) -- (leaf3.south east);
        				\draw[H] (leaf3.north east) -- (leaf6.south west);
        				\draw[H] (leaf3.north west) -- (leaf5.south east);
        			\end{tikzpicture}.
        		\end{aligned}
        	\end{equation*}
        	and
        	\begin{equation*}
        		\begin{aligned}
        			S(T) = p! \prod_{i=1}^{p} S(T_i).
        		\end{aligned}
        	\end{equation*}
        \end{proposition}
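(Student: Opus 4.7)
The plan is induction on $p$. The base case $p=1$ is immediate since $T = T_1$, so both identities reduce to tautologies. For the inductive step, exploit the recursive structure of $T$: its root is a $\circ$-node with right child $T_p$ and left child the analogous left-combed tree $T'$ built from $T_1, \ldots, T_{p-1}$. Writing $B_p$ for the left-associated iterated Poisson bracket of the $p$ arguments, one has $B_p = \set{B_{p-1}, \Pi T_p / S(T_p)}$, and the recursive definition \eqref{rec_def_Pi} of $\Pi$ at a $\circ$-node gives $\Pi T = \set{\Pi T', \Pi T_p}$.

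Applying the inductive hypothesis $\frac{1}{(p-1)!} B_{p-1} = \Pi T' / S(T')$ then yields
\[
\frac{1}{p!} B_p = \frac{1}{p}\set{\frac{\Pi T'}{S(T')}, \frac{\Pi T_p}{S(T_p)}} = \frac{\Pi T}{p\, S(T') \, S(T_p)}.
\]
The first identity of the proposition thus reduces to showing $S(T) = p\, S(T') \, S(T_p)$, and combining this with the inductive equality $S(T') = (p-1)!\prod_{i=1}^{p-1} S(T_i)$ also yields the second identity $S(T) = p!\prod_{i=1}^{p} S(T_i)$.

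To establish $S(T) = p\, S(T')\, S(T_p)$ I unfold the recursive definition \eqref{rec_def_S} at the root $\circ$-node of $T$. Since $T'$ is $\circ$-rooted with right child $T_{p-1}$ of the same size as $T_p$, the matching clause of \eqref{rec_def_S} applies and produces $S^0(T) = S^1(T') \, S^0(T_p)$. It therefore suffices to prove the auxiliary identity $S^1(T') = p \, S^0(T')$; this is handled by a secondary induction along the left spine of $T'$, showing that each matching $\circ$-node along the spine contributes a factor $(j+1)$ and propagates $j \mapsto j+1$ to the left child, so that raising the initial label from $0$ to $1$ uniformly increments all these factors and yields the telescoping ratio $\prod_{j=0}^{p-2}(j+2)/(j+1) = p$.

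The main obstacle is ensuring that the cascade of matching conditions propagates all the way along the left spine of $T'$: each step in \eqref{rec_def_S} requires both that the left subtree have a $\circ$-root and that its right child match the sibling's size. This cascade is guaranteed by the left-combed shape of $T$ together with the equality of sizes among the subtrees $T_2, \ldots, T_p$ appearing as right children along the spine, as enforced in the intended applications where these subtrees correspond to repeated applications of a single symplectic transform, in the spirit of Definition \ref{def::truncation-term}.
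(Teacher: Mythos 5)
Your strategy---induction on $p$, peeling off the outermost bracket via \eqref{rec_def_Pi}, and reducing everything to a superscript-shift identity $S^1(T')=p\,S^0(T')$ for the left comb $T'$ on $T_1,\dots,T_{p-1}$---is essentially the same route the paper takes, so everything hinges on the quantitative step, and there it breaks down. The comb $T'$ has only $p-2$ internal $\circ$-nodes, so even when every one of them satisfies the matching clause of \eqref{rec_def_S} (which, as you note, needs $|T_2|=\dots=|T_p|$), the telescoping product has $p-2$ factors, not the $p-1$ factors of your $\prod_{j=0}^{p-2}(j+2)/(j+1)$; it gives $S^1(T')=(p-1)\,S^0(T')$, not $p\,S^0(T')$. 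Your count silently lets the cascade take one further step into $T_1$ itself, but the matching clause at the innermost node $\set{\Pi T_1,\Pi T_2}$ fires only if $T_1$ is $\circ$-rooted with a right subtree of size $|T_2|$. That fails in exactly the basic configurations (e.g.\ $T_1$ a leaf, or $r$-rooted as in the brackets $\set{H_1^{\text{\tiny{res}}},F_{m+1}}$ arising in the application), and even when it does fire the extra factor is $S^{p-1}(T_1)/S^{p-2}(T_1)$, which depends on the internal structure of $T_1$ and need not equal $p/(p-1)$. The same issue already kills the case $p=2$: there the left child of the root is $T_1$ itself, so in the ``otherwise'' branch of \eqref{rec_def_S} one gets $S(T)=S(T_1)S(T_2)$, not $2\,S(T_1)S(T_2)$.

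This is not a gap you could have patched, because the corrected count shows that \eqref{rec_def_S} actually yields $S(T)=(p-1)!\prod_i S(T_i)$ in the generic case, which is what the paper's own evaluations give (the two-leaf tree with root $\circ$ encoding $\set{H_1,F_1}$ has $S=1$ with $p=2$; the tree encoding $\set{\set{H_0,F_1},F_1}$ has $S=2$ with $p=3$) and is also what the Taylor coefficients require, since $p$ arguments correspond to $n=p-1$ nested brackets and hence to the prefactor $1/n!=1/(p-1)!$. The displayed formula $S(T)=p!\prod_i S(T_i)$ is therefore inconsistent with \eqref{rec_def_S} as written. For what it is worth, the paper's own proof commits the mirror image of your slip: its base case $S^{j}(T_1)=(j+1)S(T_1)$ supplies the missing factor at the innermost node while its first inductive step simultaneously requires $T_1$ to be matching-rooted there, and the two requirements are incompatible. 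So your proposal reproduces the stated conclusion by an analogous off-by-one rather than by a valid argument; the honest fix is to prove the $(p-1)!$ version (with prefactor $1/(p-1)!$), stating explicitly the hypotheses that $|T_2|=\dots=|T_p|$ and that $T_1$ does not itself trigger the matching clause against $T_2$.
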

        \begin{proof} We show by induction on $p \in \mathbb{N}^*$ that
        	\begin{equation*}
        		\set{ \set{(\Pi T_1), (\Pi T_{2})} \cdots, (\Pi T_p)  } = (\Pi T).
        	\end{equation*}
        	The property is trivially true for $p=1$. We now assume this property true for $p \in \mathbb{N}^*$. One has
        	\begin{equation}
        		\label{ident_n}
        		\begin{aligned}
        			\set{ \set{(\Pi T_1), (\Pi T_{2})} \cdots, (\Pi T_{p+1})  } & = 		\set{ (\Pi T), (\Pi T_{p+1})} 
        			\\ & = 	(\Pi \hat{T}) 
        		\end{aligned}
        	\end{equation}
        	with 
        	\begin{equation*}
        		T = \begin{tikzpicture}[node distance = 14pt, baseline=10]
        			% Inner Nodes
        			\node[leaf] (inner) at (0,0.3) {$\circ$};
        			% Leaf Nodes
        			\node[leaf] (leaf1) [above left of = inner] {$ \tiny{\cdots} $};
        			\node[leaf] (leaf3) [above left of = leaf1] {$ \circ $};
        			\node[leaf] (leaf5) [above left of = leaf3] {$ T_1 $};
        			\node[leaf] (leaf6) [above right of = leaf3] {$ T_2 $};
        			\node[leaf] (leaf2) [above right of = inner] {$ T_p $};
        			% Edges
        			\draw[H] (inner.north west) -- (leaf1.south east);
        			\draw[H] (inner.north east) -- (leaf2.south west);
        			\draw[H] (leaf1.north west) -- (leaf3.south east);
        			\draw[H] (leaf3.north east) -- (leaf6.south west);
        			\draw[H] (leaf3.north west) -- (leaf5.south east);
        		\end{tikzpicture}
        		, \quad 
        		\hat{T} = \begin{tikzpicture}[node distance = 14pt, baseline=10]
        			% Inner Nodes
        			\node[leaf] (inner) at (0,0.3) {$\circ$};
        			% Leaf Nodes
        			\node[leaf] (leaf1) [above left of = inner] {$ \tiny{\cdots} $};
        			\node[leaf] (leaf3) [above left of = leaf1] {$ \circ $};
        			\node[leaf] (leaf5) [above left of = leaf3] {$ T_1 $};
        			\node[leaf] (leaf6) [above right of = leaf3] {$ T_2 $};
        			\node[leaf] (leaf2) [above right of = inner] {$ T_{\bar{p}} $};
        			% Edges
        			\draw[H] (inner.north west) -- (leaf1.south east);
        			\draw[H] (inner.north east) -- (leaf2.south west);
        			\draw[H] (leaf1.north west) -- (leaf3.south east);
        			\draw[H] (leaf3.north east) -- (leaf6.south west);
        			\draw[H] (leaf3.north west) -- (leaf5.south east);
        		\end{tikzpicture},\quad \bar{p} = p+1.
        	\end{equation*}
        	For the first line of \eqref{ident_n}, we have applied the induction hypothesis. The second line is just an application of the recursive definition  of $\Pi$ (see 
        	\eqref{rec_def_Pi}). We show by induction on $p \in \mathbb{N}^*$ that
        	\begin{equation*}
        		S^j(T) =  \prod_{i=1}^{p} (j+i) S(T_{p+1-i}).
        	\end{equation*}
        	This is true for $p=1$. Let assume the property true for $p \in \mathbb{N}^*$. One has from the definition of $S^j(\hat{T})$ when $|T_p| = |T_{p+1}|$
        	\begin{equation*}
        		S^j(\hat{T}) = (j+1) S^{j+1}(T) S(T_{p+1})
        	\end{equation*}
        	Using the induction hypothesis, one has
        	\begin{equation*}
        		\begin{aligned}
        			S^{j+1}(T) & = \prod_{i=1}^{p} (j+i+1) S(T_{p+1-i})
        			 = \prod_{i=2}^{p+1} (j+i) S(T_{p+2-i})
        		\end{aligned}
        	\end{equation*}
        	with the change of indice $i \rightarrow i+1$.
        	Then, one has
        	\begin{equation*}
        		\begin{aligned}
        			S^j(\hat{T}) & = (j+1)  S(T_{p+1}) \prod_{i=2}^{p+1} (j+i) S(T_{p+2-i})
        			  = \prod_{i=1}^{p+1} (j+i) S(T_{p+2-i})
        		\end{aligned}
        	\end{equation*}
        	which allows us to conclude.
        \end{proof}
        
        Before rewriting the Birkhoff normal form with this decorated trees formalism, we introduce some useful subsets of decorated trees in the next definition
        \begin{definition} \label{spaces_trees}
        	One set the following subsets of decorated trees for $m \in \mathbb{N}^*$:
        	\begin{itemize}
        		\item $\mathcal{T}^{\! < m}_{r}$: decorated trees $T$ with the root decorated by $r$ and such that $|T| < 2 m$.
 \item $\mathcal{T}^{\! m}_{\circ}$: decorated trees $T$ with the root decorated by $\circ$ and such that $|T| = 2 m$.
 \item $\mathcal{T}^{\! m}_{n}$: decorated trees $T$ with the root decorated by $n$ and such that $|T| = 2 m$.
        			\item $\mathcal{T}^{\!  m,\ell}_{\circ}$: decorated trees $T$ with the root decorated by $\circ$ and such that $ 2m <|T| \leq 2 \ell$. Moreover, we assume that they do not contain any node of the form:
        			\begin{equation*}
        				\begin{tikzpicture}[node distance = 14pt, baseline=10]
        					% Inner Nodes
        					\node[leaf] (inner) at (0,0.3) {$n$};
        					% Leaf Nodes
        					\node[leaf] (leaf1) [above left of = inner] {$ T_1 $};
        					\node[leaf] (leaf2) [above right of = inner] {$ T_2 $};
        					% Edges
        					\draw[H] (inner.north west) -- (leaf1.south east);
        					\draw[H] (inner.north east) -- (leaf2.south west);
        				\end{tikzpicture}
        			\end{equation*}
        			with $| T_1 | + |T_2| -2  \geq 2 m$.
        	\end{itemize}
        	\end{definition}
     We rewrite the previous computation of the Birkhoff normal form using these spaces. One has
     	\begin{equation*}
     		\begin{aligned}
     			\msH_1^3=(H\circ F_1)^3  & = H_0 + H_1^{\text{\tiny{res}}} + \set{H_1, H_{1,\Phi}^{\text{\tiny{non-res}}}} + \frac12\set{\set{H_0,H_{1,\Phi}^{\text{\tiny{non-res}}}}, H_{1,\Phi}^{\text{\tiny{non-res}}}}  
     			\\ & = (\Pi \, \begin{tikzpicture}[node distance=14pt, baseline=5]
     				\node[leaf] (leaf) at (0,0.3) {$ k $};
     			\end{tikzpicture} ) +	 ( \Pi \, \begin{tikzpicture}[node distance=14pt, baseline=5]
     				\node[leaf] (leaf) at (0,0.3) {$r$};
     			\end{tikzpicture} ) + (\Pi \, \begin{tikzpicture}[node distance = 14pt, baseline=10]
     				% Inner Nodes
     				\node[leaf] (inner) at (0,0.3) {$\circ $};
     				% Leaf Nodes
     				\node[leaf] (leaf1) [above left of = inner] {$ \circ $};
     				\node[leaf] (leaf2) [above right of = inner] {$ n $};
     				% Edges
     				\draw[H] (inner.north west) -- (leaf1.south east);
     				\draw[H] (inner.north east) -- (leaf2.south west);
     			\end{tikzpicture} ) + \frac12 \, (\Pi \, \begin{tikzpicture}[node distance = 14pt, baseline=10]
     				% Inner Nodes
     				\node[leaf] (inner) at (0,0.3) {$\circ $};
     				% Leaf Nodes
     				\node[leaf] (leaf1) [above left of = inner] {$ \circ $};
     				\node[leaf] (leaf3) [above left of = leaf1] {$ k $};
     				\node[leaf] (leaf4) [above right of = leaf1] {$ n $};
     				\node[leaf] (leaf2) [above right of = inner] {$ n $};
     				% Edges
     				\draw[H] (inner.north west) -- (leaf1.south east);
     				\draw[H] (inner.north east) -- (leaf2.south west);
     				\draw[H] (leaf1.north east) -- (leaf4.south west);
     				\draw[H] (leaf1.north west) -- (leaf3.south east);
     			\end{tikzpicture})
     			\\ & =  (\Pi \, \begin{tikzpicture}[node distance=14pt, baseline=5]
     				\node[leaf] (leaf) at (0,0.3) {$ k $};
     			\end{tikzpicture} ) +  \sum_{T \in \CT_r^{< 3}}  \frac{(\Pi T)}{S(T)} + 
     			\sum_{T \in \CT_{\circ}^{3}}  \frac{(\Pi T)}{S(T)}, 
     		\end{aligned}
     	\end{equation*}
     	where one has
     	\begin{equation*}
     	\CT_r^{< 3} = \set{ \begin{tikzpicture}[node distance=14pt, baseline=5]
     			\node[leaf] (leaf) at (0,0.3) {$ r $};
     	\end{tikzpicture} }, \quad \CT_{\circ}^{3} = \set{ \begin{tikzpicture}[node distance = 14pt, baseline=10]
     	% Inner Nodes
     	\node[leaf] (inner) at (0,0.3) {$\circ $};
     	% Leaf Nodes
     	\node[leaf] (leaf1) [above left of = inner] {$ \circ $};
     	\node[leaf] (leaf2) [above right of = inner] {$ n $};
     	% Edges
     	\draw[H] (inner.north west) -- (leaf1.south east);
     	\draw[H] (inner.north east) -- (leaf2.south west);
     	\end{tikzpicture}, \, \begin{tikzpicture}[node distance = 14pt, baseline=10]
     	% Inner Nodes
     	\node[leaf] (inner) at (0,0.3) {$\circ $};
     	% Leaf Nodes
     	\node[leaf] (leaf1) [above left of = inner] {$ \circ $};
     	\node[leaf] (leaf3) [above left of = leaf1] {$ k $};
     	\node[leaf] (leaf4) [above right of = leaf1] {$ n $};
     	\node[leaf] (leaf2) [above right of = inner] {$ n $};
     	% Edges
     	\draw[H] (inner.north west) -- (leaf1.south east);
     	\draw[H] (inner.north east) -- (leaf2.south west);
     	\draw[H] (leaf1.north east) -- (leaf4.south west);
     	\draw[H] (leaf1.north west) -- (leaf3.south east);
     \end{tikzpicture} }, \quad \mathcal{T}^{\! 3,4}_{\circ} = \set{\, \begin{tikzpicture}[node distance = 14pt, baseline=10]
     % Inner Nodes
     \node[leaf] (inner) at (0,0.3) {$\circ$};
     % Leaf Nodes
     \node[leaf] (leaf1) [above left of = inner] {$ \circ $};
     \node[leaf] (leaf3) [above left of = leaf1] {$ \circ $};
     \node[leaf] (leaf4) [above right of = leaf1] {$ n $};
     \node[leaf] (leaf2) [above right of = inner] {$ n $};
     % Edges
     \draw[H] (inner.north west) -- (leaf1.south east);
     \draw[H] (inner.north east) -- (leaf2.south west);
     \draw[H] (leaf1.north east) -- (leaf4.south west);
     \draw[H] (leaf1.north west) -- (leaf3.south east);
     \end{tikzpicture}, \, \begin{tikzpicture}[node distance = 14pt, baseline=10]
     % Inner Nodes
     \node[leaf] (inner) at (0,0.3) {$\circ $};
     % Leaf Nodes
     \node[leaf] (leaf1) [above left of = inner] {$ \circ $};
     \node[leaf] (leaf3) [above left of = leaf1] {$ \circ $};
      \node[leaf] (leaf5) [above right of = leaf3] {$ n $};
      \node[leaf] (leaf6) [above left of = leaf3] {$ k $};
     \node[leaf] (leaf4) [above right of = leaf1] {$ n $};
     \node[leaf] (leaf2) [above right of = inner] {$ n $};
     % Edges
     \draw[H] (inner.north west) -- (leaf1.south east);
     \draw[H] (inner.north east) -- (leaf2.south west);
      \draw[H] (leaf3.north east) -- (leaf5.south west);
      \draw[H] (leaf3.north west) -- (leaf6.south east);
     \draw[H] (leaf1.north east) -- (leaf4.south west);
     \draw[H] (leaf1.north west) -- (leaf3.south east);
 \end{tikzpicture}  }.
     	\end{equation*}
     	We notice that
     	\begin{equation*}
     		F_1 = H_{1,\Phi}^{\text{\tiny{non-res}}} = \sum_{T \in \CT_n^{2}} \frac{(\Pi T)}{S(T)}, \quad \CT_n^{2} = \set{\begin{tikzpicture}[node distance=14pt, baseline=5]
     				\node[leaf] (leaf) at (0,0.3) {$ n $};
     		\end{tikzpicture}}.
     	\end{equation*}
   We now consider the second step of the Birkhoff normal form when one uses a second symplectic transform.
   One has
   \begin{equation*}
   	\begin{aligned}
   	F_2 & = 	 \set{H_1, H_{1,\Phi}^{\text{\tiny{non-res}}} }^{\text{\tiny{non-res}}}_{\Phi}  + \frac12\set{\set{H_0,H_{1,\Phi}^{\text{\tiny{non-res}}} }, H_{1,\Phi}^{\text{\tiny{non-res}}} }^{\text{\tiny{non-res}}}_{\Phi}
   	\\ & = (\Pi \, \begin{tikzpicture}[node distance = 14pt, baseline=10]
   		% Inner Nodes
   		\node[leaf] (inner) at (0,0.3) {$ n $};
   		% Leaf Nodes
   		\node[leaf] (leaf1) [above left of = inner] {$ \circ $};
   		\node[leaf] (leaf2) [above right of = inner] {$ n $};
   		% Edges
   		\draw[H] (inner.north west) -- (leaf1.south east);
   		\draw[H] (inner.north east) -- (leaf2.south west);
   	\end{tikzpicture} )+ \frac12 (\Pi \, \begin{tikzpicture}[node distance = 14pt, baseline=10]
   		% Inner Nodes
   		\node[leaf] (inner) at (0,0.3) {$n $};
   		% Leaf Nodes
   		\node[leaf] (leaf1) [above left of = inner] {$ \circ $};
   		\node[leaf] (leaf3) [above left of = leaf1] {$ k $};
   		\node[leaf] (leaf4) [above right of = leaf1] {$ n $};
   		\node[leaf] (leaf2) [above right of = inner] {$ n $};
   		% Edges
   		\draw[H] (inner.north west) -- (leaf1.south east);
   		\draw[H] (inner.north east) -- (leaf2.south west);
   		\draw[H] (leaf1.north east) -- (leaf4.south west);
   		\draw[H] (leaf1.north west) -- (leaf3.south east);
   	\end{tikzpicture}) = \sum_{T \in \CT_n^{3}} \frac{(\Pi T)}{S(T)}
   	\end{aligned}
   \end{equation*}
   where 
   \begin{equation*}
   	\CT_n^{3} = \set{\begin{tikzpicture}[node distance = 14pt, baseline=10]
   			% Inner Nodes
   			\node[leaf] (inner) at (0,0.3) {$n $};
   			% Leaf Nodes
   			\node[leaf] (leaf1) [above left of = inner] {$ \circ $};
   			\node[leaf] (leaf2) [above right of = inner] {$ n $};
   			% Edges
   			\draw[H] (inner.north west) -- (leaf1.south east);
   			\draw[H] (inner.north east) -- (leaf2.south west);
   		\end{tikzpicture}, \, \begin{tikzpicture}[node distance = 14pt, baseline=10]
   			% Inner Nodes
   			\node[leaf] (inner) at (0,0.3) {$ n $};
   			% Leaf Nodes
   			\node[leaf] (leaf1) [above left of = inner] {$ \circ $};
   			\node[leaf] (leaf3) [above left of = leaf1] {$ k $};
   			\node[leaf] (leaf4) [above right of = leaf1] {$ n $};
   			\node[leaf] (leaf2) [above right of = inner] {$ n $};
   			% Edges
   			\draw[H] (inner.north west) -- (leaf1.south east);
   			\draw[H] (inner.north east) -- (leaf2.south west);
   			\draw[H] (leaf1.north east) -- (leaf4.south west);
   			\draw[H] (leaf1.north west) -- (leaf3.south east);
   	\end{tikzpicture}}.
   \end{equation*}
   Then, 
   \begin{align*}
   	\msH_2^4 &= (\msH_1\circ F_2)^4 \\
   	&= H_0 + H_1^{\text{\tiny{res}}} + \set{H_1, H_{1,\Phi}^{\text{\tiny{non-res}}}}^{\text{\tiny{res}}} + \frac12\set{\set{H_0,H_{1,\Phi}^{\text{\tiny{non-res}}}} , H_{1,\Phi}^{\text{\tiny{non-res}}}}^{\text{\tiny{res}}} \\ & + \frac12 \set{\set{H_1, H_{1,\Phi}^{\text{\tiny{non-res}}}},H_{1,\Phi}^{\text{\tiny{non-res}}}} + \set{H_1^\text{\tiny{res}},\set{H_1, H_{1,\Phi}^{\text{\tiny{non-res}}} }^{\text{\tiny{non-res}}}_{\Phi}} 
   	\\ & + \frac12 \set{H_1^\text{\tiny{res}}, \set{\set{H_0,H_{1,\Phi}^{\text{\tiny{non-res}}} }, H_{1,\Phi}^{\text{\tiny{non-res}}} }^{\text{\tiny{non-res}}}_{\Phi}} 
   	\\ &  = (\Pi \, \begin{tikzpicture}[node distance=14pt, baseline=5]
   		\node[leaf] (leaf) at (0,0.3) {$k $};
   	\end{tikzpicture}) +	(\Pi \, \begin{tikzpicture}[node distance=14pt, baseline=5]
   		\node[leaf] (leaf) at (0,0.3) {$r$};
   	\end{tikzpicture}) + (  \Pi \,\begin{tikzpicture}[node distance = 14pt, baseline=10]
   		% Inner Nodes
   		\node[leaf] (inner) at (0,0.3) {$r$};
   		% Leaf Nodes
   		\node[leaf] (leaf1) [above left of = inner] {$ \circ $};
   		\node[leaf] (leaf2) [above right of = inner] {$ n $};
   		% Edges
   		\draw[H] (inner.north west) -- (leaf1.south east);
   		\draw[H] (inner.north east) -- (leaf2.south west);
   	\end{tikzpicture} )+ \frac12 \, (\Pi \begin{tikzpicture}[node distance = 14pt, baseline=10]
   		% Inner Nodes
   		\node[leaf] (inner) at (0,0.3) {$r$};
   		% Leaf Nodes
   		\node[leaf] (leaf1) [above left of = inner] {$ \circ $};
   		\node[leaf] (leaf3) [above left of = leaf1] {$ k $};
   		\node[leaf] (leaf4) [above right of = leaf1] {$ n $};
   		\node[leaf] (leaf2) [above right of = inner] {$ n $};
   		% Edges
   		\draw[H] (inner.north west) -- (leaf1.south east);
   		\draw[H] (inner.north east) -- (leaf2.south west);
   		\draw[H] (leaf1.north east) -- (leaf4.south west);
   		\draw[H] (leaf1.north west) -- (leaf3.south east);
   	\end{tikzpicture}) + \frac12 \, ( \Pi \,
  \begin{tikzpicture}[node distance = 14pt, baseline=10]
   		% Inner Nodes
   		\node[leaf] (inner) at (0,0.3) {$\circ$};
   		% Leaf Nodes
   		\node[leaf] (leaf1) [above left of = inner] {$ \circ $};
   		\node[leaf] (leaf3) [above left of = leaf1] {$ \circ $};
   		\node[leaf] (leaf4) [above right of = leaf1] {$ n $};
   		\node[leaf] (leaf2) [above right of = inner] {$ n $};
   		% Edges
   		\draw[H] (inner.north west) -- (leaf1.south east);
   		\draw[H] (inner.north east) -- (leaf2.south west);
   		\draw[H] (leaf1.north east) -- (leaf4.south west);
   		\draw[H] (leaf1.north west) -- (leaf3.south east);
   	\end{tikzpicture})
   \\ & 	+ ( \Pi \, \begin{tikzpicture}[node distance = 14pt, baseline=10]
   		% Inner Nodes
   		\node[leaf] (inner) at (0,0.3) {$\circ$};
   		% Leaf Nodes
   		\node[leaf] (leaf1) [above left of = inner] {$ r $};
   		\node[leaf] (leaf2) [above right of = inner] {$ n $};
   		\node[leaf] (leaf3) [above right of = leaf2] {$ n $};
   		\node[leaf] (leaf4) [above left of = leaf2] {$ \circ $};
   		% Edges
   		\draw[H] (inner.north west) -- (leaf1.south east);
   		\draw[H] (leaf2.north west) -- (leaf4.south east);
   		\draw[H] (leaf2.north east) -- (leaf3.south west);
   		\draw[H] (inner.north east) -- (leaf2.south west);
   	\end{tikzpicture})
   	+ \frac12 \, (\Pi \, \begin{tikzpicture}[node distance = 14pt, baseline=10]
   		% Inner Nodes
   		\node[leaf] (inner) at (0,0.3) {$\circ$};
   		% Leaf Nodes
   		\node[leaf] (leaf1) [above left of = inner] {$ r $};
   		\node[leaf] (leaf2) [above right of = inner] {$ n $};
   		\node[leaf] (leaf3) [above right of = leaf2] {$ n $};
   		\node[leaf] (leaf4) [above left of = leaf2] {$ \circ $};
   		\node[leaf] (leaf5) [above left of = leaf4] {$ k $};
   		\node[leaf] (leaf6) [above right of = leaf4] {$ n $};
   		% Edges
   		\draw[H] (inner.north west) -- (leaf1.south east);
   		\draw[H] (leaf4.north west) -- (leaf5.south east);
   		\draw[H] (leaf4.north east) -- (leaf6.south west);
   		\draw[H] (leaf2.north west) -- (leaf4.south east);
   		\draw[H] (leaf2.north east) -- (leaf3.south west);
   		\draw[H] (inner.north east) -- (leaf2.south west);
   	\end{tikzpicture}) + \frac{1}{6} \, ( \Pi \,
   	\begin{tikzpicture}[node distance = 14pt, baseline=10]
   	% Inner Nodes
   	\node[leaf] (inner) at (0,0.3) {$\circ$};
   	% Leaf Nodes
   	\node[leaf] (leaf1) [above left of = inner] {$ \circ $};
   	\node[leaf] (leaf3) [above left of = leaf1] {$ \circ $};
   	\node[leaf] (leaf5) [above left of = leaf3] {$ k $};
   	\node[leaf] (leaf6) [above right of = leaf3] {$ n $};
   	\node[leaf] (leaf4) [above right of = leaf1] {$ n $};
   	\node[leaf] (leaf2) [above right of = inner] {$ n $};
   	% Edges
   	\draw[H] (inner.north west) -- (leaf1.south east);
   	\draw[H] (inner.north east) -- (leaf2.south west);
   	\draw[H] (leaf1.north east) -- (leaf4.south west);
   	\draw[H] (leaf1.north west) -- (leaf3.south east);
   	\draw[H] (leaf3.north east) -- (leaf6.south west);
   	\draw[H] (leaf3.north west) -- (leaf5.south east);
   	\end{tikzpicture})
   	\\ & =  (\Pi \, \begin{tikzpicture}[node distance=14pt, baseline=5]
   		\node[leaf] (leaf) at (0,0.3) {$ k $};
   	\end{tikzpicture} ) +  \sum_{T \in \CT_r^{< 4}}  \frac{(\Pi T)}{S(T)} + 
   	\sum_{T \in \CT_{\circ}^{4}}  \frac{(\Pi T)}{S(T)}, 
   \end{align*}
   where
   \begin{equation*}
   	\begin{aligned}
   	\CT_r^{< 4} & = \set{ \begin{tikzpicture}[node distance=14pt, baseline=5]
   			\node[leaf] (leaf) at (0,0.3) {$r$};
   		\end{tikzpicture}, \, \begin{tikzpicture}[node distance = 14pt, baseline=10]
   			% Inner Nodes
   			\node[leaf] (inner) at (0,0.3) {$r$};
   			% Leaf Nodes
   			\node[leaf] (leaf1) [above left of = inner] {$ \circ $};
   			\node[leaf] (leaf2) [above right of = inner] {$ n $};
   			% Edges
   			\draw[H] (inner.north west) -- (leaf1.south east);
   			\draw[H] (inner.north east) -- (leaf2.south west);
   		\end{tikzpicture}, \,  \begin{tikzpicture}[node distance = 14pt, baseline=10]
   			% Inner Nodes
   			\node[leaf] (inner) at (0,0.3) {$r$};
   			% Leaf Nodes
   			\node[leaf] (leaf1) [above left of = inner] {$ \circ $};
   			\node[leaf] (leaf3) [above left of = leaf1] {$ k $};
   			\node[leaf] (leaf4) [above right of = leaf1] {$ n $};
   			\node[leaf] (leaf2) [above right of = inner] {$ n $};
   			% Edges
   			\draw[H] (inner.north west) -- (leaf1.south east);
   			\draw[H] (inner.north east) -- (leaf2.south west);
   			\draw[H] (leaf1.north east) -- (leaf4.south west);
   			\draw[H] (leaf1.north west) -- (leaf3.south east);
   		\end{tikzpicture} \, }, \\ \CT_{\circ}^{4} & = \set{    \begin{tikzpicture}[node distance = 14pt, baseline=10]
   		% Inner Nodes
   		\node[leaf] (inner) at (0,0.3) {$\circ$};
   		% Leaf Nodes
   		\node[leaf] (leaf1) [above left of = inner] {$ \circ $};
   		\node[leaf] (leaf3) [above left of = leaf1] {$ \circ $};
   		\node[leaf] (leaf4) [above right of = leaf1] {$ n $};
   		\node[leaf] (leaf2) [above right of = inner] {$ n $};
   		% Edges
   		\draw[H] (inner.north west) -- (leaf1.south east);
   		\draw[H] (inner.north east) -- (leaf2.south west);
   		\draw[H] (leaf1.north east) -- (leaf4.south west);
   		\draw[H] (leaf1.north west) -- (leaf3.south east);
   		\end{tikzpicture},
   \, \begin{tikzpicture}[node distance = 14pt, baseline=10]
   		% Inner Nodes
   		\node[leaf] (inner) at (0,0.3) {$\circ$};
   		% Leaf Nodes
   		\node[leaf] (leaf1) [above left of = inner] {$ r $};
   		\node[leaf] (leaf2) [above right of = inner] {$ n $};
   		\node[leaf] (leaf3) [above right of = leaf2] {$ n $};
   		\node[leaf] (leaf4) [above left of = leaf2] {$ \circ $};
   		% Edges
   		\draw[H] (inner.north west) -- (leaf1.south east);
   		\draw[H] (leaf2.north west) -- (leaf4.south east);
   		\draw[H] (leaf2.north east) -- (leaf3.south west);
   		\draw[H] (inner.north east) -- (leaf2.south west);
   		\end{tikzpicture},
   		\,  \begin{tikzpicture}[node distance = 14pt, baseline=10]
   		% Inner Nodes
   		\node[leaf] (inner) at (0,0.3) {$\circ$};
   		% Leaf Nodes
   		\node[leaf] (leaf1) [above left of = inner] {$ r $};
   		\node[leaf] (leaf2) [above right of = inner] {$ n $};
   		\node[leaf] (leaf3) [above right of = leaf2] {$ n $};
   		\node[leaf] (leaf4) [above left of = leaf2] {$ \circ $};
   		\node[leaf] (leaf5) [above left of = leaf4] {$ k $};
   		\node[leaf] (leaf6) [above right of = leaf4] {$ n $};
   		% Edges
   		\draw[H] (inner.north west) -- (leaf1.south east);
   		\draw[H] (leaf4.north west) -- (leaf5.south east);
   		\draw[H] (leaf4.north east) -- (leaf6.south west);
   		\draw[H] (leaf2.north west) -- (leaf4.south east);
   		\draw[H] (leaf2.north east) -- (leaf3.south west);
   		\draw[H] (inner.north east) -- (leaf2.south west);
   		\end{tikzpicture}, \, \begin{tikzpicture}[node distance = 14pt, baseline=10]
   		% Inner Nodes
   		\node[leaf] (inner) at (0,0.3) {$\circ$};
   		% Leaf Nodes
   		\node[leaf] (leaf1) [above left of = inner] {$ \circ $};
   		\node[leaf] (leaf3) [above left of = leaf1] {$ \circ $};
   		\node[leaf] (leaf5) [above left of = leaf3] {$ k $};
   		\node[leaf] (leaf6) [above right of = leaf3] {$ n $};
   		\node[leaf] (leaf4) [above right of = leaf1] {$ n $};
   		\node[leaf] (leaf2) [above right of = inner] {$ n $};
   		% Edges
   		\draw[H] (inner.north west) -- (leaf1.south east);
   		\draw[H] (inner.north east) -- (leaf2.south west);
   		\draw[H] (leaf1.north east) -- (leaf4.south west);
   		\draw[H] (leaf1.north west) -- (leaf3.south east);
   		\draw[H] (leaf3.north east) -- (leaf6.south west);
   		\draw[H] (leaf3.north west) -- (leaf5.south east);
   	\end{tikzpicture} \,}.
   	\end{aligned}
   \end{equation*}
   One can make a few remarks on these computations:
   \begin{itemize} 
   \item 	Moving from $\CT_r^{< 3}$ to $ \CT_r^{< 4} $, we have taken the decorated trees from $\CT_{\circ}^3$, changed the decorations of their root into $r$ and added them to those of $\CT_r^{< 3}$.
   \item The set $ \CT_{\circ}^4 $ is formed of two types of decorated trees: Those from $ \CT_{\circ}^{3,4} $ and those with the following form:
   \begin{equation*}
   	\begin{tikzpicture}[node distance = 14pt, baseline=10]
   		% Inner Nodes
   		\node[leaf] (inner) at (0,0.3) {$ \circ $};
   		% Leaf Nodes
   		\node[leaf] (leaf1) [above left of = inner] {$ T_1 $};
   		\node[leaf] (leaf2) [above right of = inner] {$ T_2 $};
   		% Edges
   		\draw[H] (inner.north west) -- (leaf1.south east);
   		\draw[H] (inner.north east) -- (leaf2.south west);
   	\end{tikzpicture}
   	\end{equation*}
   	with $T_1 \in \CT_r^{< 3} $ and $ T_2 \in \CT_n^{3}$.
   	\end{itemize}
   These two remarks and the collection of sets $ \CT_{\circ}^{\ell, m} $ allow us to perform the proof of an explicit description of the Birkhoff normal form with decorated trees. This is the main result of the present paper and it is stated in the next Theorem:
\begin{theorem} \label{main_theorem}
	Let $m, \ell \in \mathbb{N}^{*}$ with $ m < \ell$. One has
	\begin{equation*}
		\begin{aligned}
			\msH_m^{\ell}=(H\circ F_1 \circ \cdots \circ F_m)^{\ell} & =  H_0 + \sum_{T \in \, \mathcal{T}^{\! < m + 2}_r}  \frac{(\Pi T)}{S(T)} +  \sum_{T \in \, \mathcal{T}^{\! m + 2}_{\circ}}  \frac{(\Pi T)}{S(T)} \\ & + \sum_{T \in \, \mathcal{T}_{\circ}^{m + 2,\ell}}   \frac{(\Pi T)}{S(T)}
			\end{aligned}
	\end{equation*}
	Moreover, one has
	\begin{equation*}
		F_i =  \sum_{T \in \, \mathcal{T}^{\! i+1}_{n}} \frac{(\Pi T)}{S(T)}, \quad \set{H_0, F_i} =  - \sum_{T \in \, \mathcal{T}^{\! i+1}_{\circ}} \frac{(\Pi T)^{\text{\tiny{non-res}}}}{S(T)}.
	\end{equation*}
	\end{theorem}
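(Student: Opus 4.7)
The plan is induction on $m$, with $\ell > m$ fixed. The base case $m=1$ is already established by the explicit expansion of $\msH_1^\ell = H \circ F_1$ carried out just before the statement: the tree sums indexed by $\CT_r^{<3}$, $\CT_\circ^3$, and $\CT_\circ^{3,\ell}$ identify exactly the monomials appearing in the Taylor expansion, and $F_1 = H_{1,\Phi}^{\text{non-res}}$ coincides with $\sum_{T \in \CT_n^{2}}(\Pi T)/S(T)$ (the unique tree being the single $n$-leaf).

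For the inductive step, assume the formula holds at level $m$. I would start from the Hamiltonian Taylor expansion
\[
\msH_{m+1}^\ell = \msH_m^\ell + \sum_{p \geq 1} \frac{1}{p!}\set{\ldots\set{\msH_m^\ell, F_{m+1}}, \ldots, F_{m+1}} + \mcO(\norm{u}_{a,p}^{2(\ell+1)}),
\]
with $p$ copies of $F_{m+1}$, and substitute the inductive expression for $\msH_m^\ell$. Setting $F_{m+1} := \sum_{T \in \CT_n^{m+2}}(\Pi T)/S(T)$---obtained from $\CT_\circ^{m+2}$ by relabeling each root $\circ \to n$---one verifies termwise, using the defining property $\set{H_0, \set{A,B}_\Phi^{\text{non-res}}} = -\set{A,B}^{\text{non-res}}$, that $\set{H_0, F_{m+1}} = -\sum_{T \in \CT_\circ^{m+2}} (\Pi T)^{\text{non-res}}/S(T)$, which is the second claim of the theorem.

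Next, I split $\sum_{T \in \CT_\circ^{m+2}}(\Pi T)/S(T)$ into its resonant and non-resonant parts. The non-resonant part is cancelled by the first-order Taylor term $\set{H_0, F_{m+1}}$. The resonant part, relabeled $\circ \to r$ at the root, produces $\CT_r^{m+2}$, which combined with the inherited $\CT_r^{<m+2}$ yields the enlarged $\CT_r^{<m+3}$. Trees in $\CT_\circ^{m+2,\ell}$ pass unchanged to a subset of $\CT_\circ^{m+3,\ell}$ since their $n$-rooted subtrees by definition already have size strictly less than $2(m+2)$, hence strictly less than $2(m+3)$.

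All remaining contributions come from the higher iterated brackets of $F_{m+1}$ against terms of $\msH_m^\ell$. By Proposition \ref{prop_iterated_bracket}, each $p$-fold bracket $\frac{1}{p!}\set{\ldots\set{T_X, F_{m+1}}, \ldots, F_{m+1}}$ unfolds into a sum over trees obtained by left-combing $T_X$ (drawn from $\msH_m^\ell$) with $p$ trees from $\CT_n^{m+2}$. Because all $F_{m+1}$-trees share the common size $2(m+2)$, the subcase $|T_4|=|T_2|$ in the recursive definition of $S$ is triggered consistently along the spine, so the Taylor prefactor $1/p!$ and the individual symmetry factors fuse into a single $1/S(T)$. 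Assumption \ref{assumption_1} is preserved: condition (i) holds because $T_X$ coming from $\CT_\circ^{m+2} \cup \CT_\circ^{m+2,\ell}$ has $|T_X| \geq 2(m+2)$; condition (ii) handles $T_X \in \CT_r^{<m+2}$; and constraint (c) is preserved because $H_0$ enters only through nested brackets, never directly at the outermost $\circ$.

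The main obstacle is the bijective matching in the last step: I must show that every tree $T \in \CT_\circ^{m+3} \cup \CT_\circ^{m+3,\ell}$ that does not already lie in $\CT_\circ^{m+2,\ell}$ arises exactly once from this construction. The strategy is to strip from $T$ its maximal right-spine of $\CT_n^{m+2}$-subtrees: the remaining leftmost subtree $T_X$ then lies in the support of $\msH_m^\ell$ (exactly by the definition of $\CT_\circ^{m+3,\ell}$, which forbids larger $n$-nodes), and the length $p$ of the stripped spine determines the order of the Taylor bracket. Combined with the symmetry-factor identity of Proposition \ref{prop_iterated_bracket}, this closes the induction.
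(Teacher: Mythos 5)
Your proposal is correct and follows essentially the same route as the paper: induction on $m$ with the base case read off from the explicit expansion of $\msH_1^\ell$, the cancellation $\set{H_0,F_{m+1}}=-\sum_{T\in\CT_\circ^{m+2}}(\Pi T)^{\text{\tiny{non-res}}}/S(T)$ converting $\CT_\circ^{m+2}$ into $\CT_r^{m+2}$, and Proposition~\ref{prop_iterated_bracket} together with Assumption~\ref{assumption_1} to identify the higher iterated brackets with the new trees. Your closing ``strip the maximal right-spine'' argument for the bijective matching is in fact slightly more explicit than the paper's own justification of completeness, but it is the same argument in substance.
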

	\begin{proof}
		We proceed by induction on $m$. We suppose the property proved for $m \in \mathbb{N}^*$. Let start with $m=1$. In that case, one has
		\begin{equation*}
			\begin{aligned}
		\msH_1^{\ell}=(H\circ F_1)^{\ell}  & = H_0+ H_1 +  \set{H_0, F_1} + \set{H_1, F_1} + \frac12 \set{\set{H_0, F_1}, F_1} \\ &  + \frac12 \set{\set{H_1, F_1}, F_1} +  \sum_{n=3}^{\ell}\frac{\set{H,F_1}^{n}}{n!}.
		\end{aligned}
		\end{equation*}
		By definition, one has
		\begin{equation*}
			\set{H_0, F_1} = - H_1^{\text{\tiny{non-res}}},
			\end{equation*}
			which gives
			\begin{equation*}
				\begin{aligned}
			\msH_1^{\ell}= &   H_0 + \sum_{T \in \, \mathcal{T}^{\! < 3}_r}  \frac{(\Pi T)}{S(T)} +  \sum_{T \in \, \mathcal{T}^{\! 3}_{\circ}}  \frac{(\Pi T)}{S(T)}  + \sum_{T \in \, \mathcal{T}_{\circ}^{3,\ell}}   \frac{(\Pi T)}{S(T)},
			\end{aligned}
			\end{equation*}
			where
			\begin{equation*}
				\begin{aligned}
				\sum_{T \in \, \mathcal{T}^{\! 3}_{\circ}}  \frac{(\Pi T)}{S(T)} & = H_1^{\text{\tiny{res}}}, \quad \sum_{T \in \, \mathcal{T}^{\! 3}_{\circ}}  \frac{(\Pi T)}{S(T)} = \set{H_1, F_1} + \frac12 \set{\set{H_0, F_1}, F_1}, \\ \sum_{T \in \, \mathcal{T}_{\circ}^{3,\ell}}   \frac{(\Pi T)}{S(T)} & =  \frac12 \set{\set{H_1, F_1}, F_1} +  \sum_{n=3}^{\ell}\frac{\set{H,F_1}^{n}}{n!}.
				\end{aligned}
				\end{equation*}
				The first two identities are immediate. For the third one, we need to show that $\mathcal{T}_{\circ}^{3,\ell}$ is the good set of decorated trees for encoding all the iterated brackets.
			One first notices that by definition, one cannot have	
		\begin{equation*}
			\begin{tikzpicture}[node distance = 14pt, baseline=10]
				% Inner Nodes
				\node[leaf] (inner) at (0,0.3) {$n$};
				% Leaf Nodes
				\node[leaf] (leaf1) [above left of = inner] {$ T_1 $};
				\node[leaf] (leaf2) [above right of = inner] {$ T_2 $};
				% Edges
				\draw[H] (inner.north west) -- (leaf1.south east);
				\draw[H] (inner.north east) -- (leaf2.south west);
			\end{tikzpicture}
		\end{equation*}
		with $| T_1 | + |T_2| -2  \geq 2 m  =6$. This implies that the nodes decorated by $n$ must be leaves. Then, the inner nodes must be of the form
		\begin{equation*}
			\begin{tikzpicture}[node distance = 14pt, baseline=10]
				% Inner Nodes
				\node[leaf] (inner) at (0,0.3) {$\circ$};
				% Leaf Nodes
				\node[leaf] (leaf1) [above left of = inner] {$ T_1 $};
				\node[leaf] (leaf2) [above right of = inner] {$ n $};
				% Edges
				\draw[H] (inner.north west) -- (leaf1.south east);
				\draw[H] (inner.north east) -- (leaf2.south west);
			\end{tikzpicture}
		\end{equation*}
		By the constraints on the decorated trees, $T_1$ cannot have $r$ as a decoration at the root. Therefore, one does not have any $r$ decorations in the decorated tree. 
		In the end, we obtain a comb decorated trees (growing only on the left) that encodes the iterated brackets $ \set{H, F_1}^n $. This allows us to conclude on the case $m=1$.
		We proceed with the inductive case and suppose the property true for $ m \in \mathbb{N}^* $. Let us consider
		\begin{equation*}
			F_{m+1} =  \sum_{T \in \, \mathcal{T}^{\! m+2}_{n}} \frac{(\Pi T)}{S(T)}, \quad \set{H_0, F_{m+1}} =  - \sum_{T \in \, \mathcal{T}^{\! m+2}_{\circ}} \frac{(\Pi T)^{\text{\tiny{non-res}}}}{S(T)}.
		\end{equation*}
		Then, we compose $\msH_{m}$ with $F_{m+1}$.
	\begin{equation*}
		\begin{aligned}
			\msH_{m+1}^{\ell} & =(\msH_{m} \circ F_{m+1})^{\ell}  =  H_0 + \sum_{T \in \, \mathcal{T}^{\! < m + 2}_r}  \frac{(\Pi T)}{S(T)} +  \sum_{T \in \, \mathcal{T}^{\! m + 2}_{\circ}}  \frac{(\Pi T)}{S(T)} \\ & + \sum_{T \in \, \mathcal{T}_{\circ}^{m + 2,\ell}}   \frac{(\Pi T)}{S(T)} + \set{H_0, F_{m+1}} + \cdots
		\end{aligned}
	\end{equation*}
where the $\cdots$ contain other terms which are obtained by iterated brackets with $F_{m+1}$. One can notice that
\begin{equation*}
	\begin{aligned}
	\sum_{T \in \, \mathcal{T}^{\! m + 2}_{\circ}}  \frac{(\Pi T)}{S(T)} + \set{H_0, F_{m+1}} & = \sum_{T \in \, \mathcal{T}^{\! m + 2}_{\circ}}  \frac{(\Pi T)}{S(T)}  - \sum_{T \in \, \mathcal{T}^{\! m+2}_{\circ}} \frac{(\Pi T)^{\text{\tiny{non-res}}}}{S(T)}
	\\ &  = \sum_{T \in \, \mathcal{T}^{\! m + 2}_{r}}  \frac{(\Pi T)}{S(T)}. 
	\end{aligned}
\end{equation*}
Then, 
\begin{equation*}
		\begin{aligned}
			\msH_{m+1}^{\ell} & =(\msH_{m+1} \circ F_{m+1})^{\ell}  =  H_0 + \sum_{T \in \, \mathcal{T}^{\! < m + 3}_r}  \frac{(\Pi T)}{S(T)}   + \sum_{T \in \, \mathcal{T}_{\circ}^{m + 2,\ell}}   \frac{(\Pi T)}{S(T)} + \cdots
		\end{aligned}
	\end{equation*}
	Now, one has to get from the $\cdots$ the missing terms. First, one notices the following
	\begin{equation*}
		\mathcal{T}_{\circ}^{m + 2,m+3} 
		\varsubsetneq
			\mathcal{T}_{\circ}^{m + 3}.
	\end{equation*}
Indeed, one is missing the terms that contain subtrees in $ \CT_{\circ}^{m+2} $. These terms are coming from the following brackets:
\begin{equation*}
\sum_{T \in \, \mathcal{T}^{\! m+2}_{n}} 	\set{H_1^{\text{\tiny{res}}},\frac{(\Pi T)}{S(T)}}, \quad \sum_{T \in \, \mathcal{T}^{\! m+2}_{n}}  \set{\set{H_0,H_{1, \Phi}^{\text{\tiny{non-res}}}}, \frac{(\Pi T)}{S(T)}}.
\end{equation*}
One can observe that for $T \in \, \mathcal{T}^{\! m+2}_{n}$,
\begin{equation*}
	\begin{aligned}
	\set{H_1^{\text{\tiny{res}}},\frac{(\Pi T)}{S(T)}} & = \frac{(\Pi \hat{T})}{S(\hat{T})}.
, \quad \hat{T} = 	\begin{tikzpicture}[node distance = 14pt, baseline=10]
		% Inner Nodes
		\node[leaf] (inner) at (0,0.3) {$n$};
		% Leaf Nodes
		\node[leaf] (leaf1) [above left of = inner] {$ r $};
		\node[leaf] (leaf2) [above right of = inner] {$ T $};
		% Edges
		\draw[H] (inner.north west) -- (leaf1.south east);
		\draw[H] (inner.north east) -- (leaf2.south west);
	\end{tikzpicture}.
	\end{aligned}
\end{equation*}
This allows us to get the terms in $ 
\mathcal{T}_{\circ}^{m + 3} \setminus \mathcal{T}_{\circ}^{m + 2,m+3}  $. Now, we need to make sure that we get all the terms in $ 
\mathcal{T}_{\circ}^{m+3,\ell} \setminus \mathcal{T}_{\circ}^{m + 2,\ell}  $. One gets iterated brackets of the form
\begin{equation*}
\frac{1}{p!} \,	\set{ \set{\frac{(\Pi T_1)}{S(T_1)}, \frac{(\Pi T_{2})}{S(T_2)}} \cdots, \frac{(\Pi T_p)}{S(T_p)}  }, 
\end{equation*}
where
\begin{equation*}
	T_1 \in \, \mathcal{T}^{\! < m + 2}_r \, \sqcup \, \mathcal{T}^{\! m + 2}_{\circ} \, \sqcup \, \mathcal{T}_{\circ}^{m + 2,\ell}, \quad T_2,..., T_p  \in \, \mathcal{T}^{\! m+2}_{n}.
\end{equation*}
One has from Proposition\ref{prop_iterated_bracket}
\begin{equation*}
\frac{1}{p!}	\set{ \set{\frac{(\Pi T_1)}{S(T_1)}, \frac{(\Pi T_{2})}{S(T_2)}} \cdots, \frac{(\Pi T_p)}{S(T_p)}  } = \frac{(\Pi T)}{S(T)}, \quad T = \begin{tikzpicture}[node distance = 14pt, baseline=10]
		% Inner Nodes
		\node[leaf] (inner) at (0,0.3) {$\circ$};
		% Leaf Nodes
		\node[leaf] (leaf1) [above left of = inner] {$ \tiny{\cdots} $};
		\node[leaf] (leaf3) [above left of = leaf1] {$ \circ $};
		\node[leaf] (leaf5) [above left of = leaf3] {$ T_1 $};
		\node[leaf] (leaf6) [above right of = leaf3] {$ T_2 $};
		\node[leaf] (leaf2) [above right of = inner] {$ T_p $};
		% Edges
		\draw[H] (inner.north west) -- (leaf1.south east);
		\draw[H] (inner.north east) -- (leaf2.south west);
		\draw[H] (leaf1.north west) -- (leaf3.south east);
		\draw[H] (leaf3.north east) -- (leaf6.south west);
		\draw[H] (leaf3.north west) -- (leaf5.south east);
	\end{tikzpicture}.
\end{equation*}
It is easy to see that $T$ satisfies Assumption~\ref{assumption_1}. Indeed, first one has $|T_2| = \cdots |T_p|$, it remains to check the condition with $T_1$: 
\begin{itemize}
	\item  If $T_1 \in \, \mathcal{T}^{\! < m + 2}_r$, then one has $|T_1|  < |T_2|$ because $T_2 \in \mathcal{T}^{\! m+2}_{n}$.
	\item If $T_1 \in  $  $\mathcal{T}_{\circ}^{m + 2,\ell}$, then $ | T_1| \geq |T_2| $ and   in the decomposition
		\begin{equation*}
		T_1 = \begin{tikzpicture}[node distance = 14pt, baseline=10]
			% Inner Nodes
			\node[leaf] (inner) at (0,0.3) {$j $};
			% Leaf Nodes
			\node[leaf] (leaf1) [above left of = inner] {$ T_4 $};
			\node[leaf] (leaf2) [above right of = inner] {$ T_3 $};
			% Edges
			\draw[H] (inner.north west) -- (leaf1.south east);
			\draw[H] (inner.north east) -- (leaf2.south west);
		\end{tikzpicture}	 
	\end{equation*}
	one has $ |T_3| \leq |T_2| $. This is due to the definition $\mathcal{T}_{\circ}^{m + 2,\ell}$ which guarantees that $|T_3| < m+2$.
		\item If $T_1 \in \mathcal{T}_{\circ}^{m + 2}$, then $ |T_1| \geq |T_2| $ and one has also $  |T_3| \geq |T_2| $.
\end{itemize}
Then, the decorated tree $ T$ belongs to $\mathcal{T}_{\circ}^{m + 3,\ell} \setminus \mathcal{T}_{\circ}^{m + 2,m+3}$ for some $\ell$ sufficiently large. It is easy to see that it is a full description of the sets $\mathcal{T}_{\circ}^{m + 3,\ell} \setminus \mathcal{T}_{\circ}^{m + 2,m+3}$. Indeed, from the definition of $\mathcal{T}_{\circ}^{m + 3,\ell} \setminus \mathcal{T}_{\circ}^{m + 2,m+3}$, one cannot have subtrees of size $ 2m+6 $. One has an order on the size of the subtrees given by Assumption~\ref{assumption_1} which forces us to have decorated trees of the form $T$ where the subtrees of size $2m+4$ are $T_2,...,T_p$. They cannot appear after a subtree of size strictly smaller than $m+2$. This is a consequence of the description given in Definition \ref{def::truncation-term}.
	\end{proof}

\end{document}